\numberwithin{equation}{section}
\theoremstyle{plain}
\newtheorem{theorem}{Theorem}[section]
\newtheorem{corollary}[theorem]{Corollary}
\newtheorem{prop}[theorem]{Proposition}
\newtheorem{lemma}[theorem]{Lemma}
\theoremstyle{definition}
\newtheorem{remark}[theorem]{Remark}
\newtheorem{definition}[theorem]{Definition}
\newcommand{\R}{\mathbb{R}}
\newcommand{\N}{\mathbb{N}}
\newcommand{\rar}{\rightarrow}
\newcommand{\eins}{\mathds{1}}
\newcommand{\integral}[1]{\int\limits_0^1 {#1} {\rm dt}}
\newcommand{\eps}{\varepsilon}
\newcommand{\del}{\delta}
\newcommand{\Del}{\Delta}
\newcommand{\conv}{{\rm conv}}
\newcommand{\cconv}{{\overline{\conv}}}
\newcommand{\bc}[1]{{\rm clb}(#1)}
\newcommand{\bb}[1]{{\rm b}{(#1)}}
 \DeclareMathOperator{\diam}{{\rm diam}}
\newcommand{\hausd}{\rho_H}
\newcommand{\whausd}{\widetilde{\rho}_H}
 \DeclareMathOperator{\codim}{{\rm codim }}
\newcommand{\dt}{{\rm dt}}
\renewcommand{\leq}{\leqslant}
\renewcommand{\geq}{\geqslant}
\renewcommand{\le}{\leqslant}
\renewcommand{\ge}{\geqslant}
\patchcmd{\abstract}{\titlepage}{\thispagestyle{empty}}{}{}
\patchcmd{\endabstract}{\endtitlepage}{\clearpage}{}{}
\begin{document}
\title[ Integrability of a multifunction and of its convex hull]{Connection between the Riemann integrability of a multi-valued function and of its convex hull}

\author[V.~Kadets]{Vladimir Kadets}

\address[Kadets]{School of Mathematics and Computer Science \\
 V.~N.~Karazin Kharkiv National University \\
Svobody square~4 \\
61022~Kharkiv \\ Ukraine
\newline
\href{http://orcid.org/0000-0002-5606-2679}{ORCID: \texttt{0000-0002-5606-2679} }
}
\email{v.kateds@karazin.ua}

\author[A.~Kulykov]{Artur Kulykov}

\address[Kulykov]{School of Mathematics and Computer Science \\
 V.~N.~Karazin Kharkiv National University \\
Svobody  square~4 \\
61022~Kharkiv \\ Ukraine
}
\email{artur.kulykov.04.02@gmail.com}

\author[O.~Shevchenko]{Olha Shevchenko}

\address[Shevchenko]{School of Mathematics and Computer Science \\
 V.~N.~Karazin Kharkiv National University \\
Svobody  square~4 \\
61022~Kharkiv \\ Ukraine
}
\email{shevchenkoos@yahoo.com}

\thanks{ The research of the first and the third authors was supported by the National Research Foundation of Ukraine funded by Ukrainian State budget as part of the project 2020.02/0096 ``Operators in infinite-dimensional spaces: the interplay between geometry, algebra and topology''}

\begin{abstract}
For a Banach space $X$ we demonstrate the equivalence of the following two properties: 

(1) $X$ is B-convex (that is, possesses a nontrivial infratype), and 

(2) if ${F: [0,1] \to 2^{X} \setminus \{\varnothing\}}$ is a {multifunction}, $\conv F$ denotes the mapping $t \mapsto \conv F(t)$, then the Riemann integrability of $\conv F$ is equivalent to the Riemann integrability of $F$.

For multifunctions with compact values the Riemann integrability of $\conv F$ is equivalent to the Riemann integrability of $F$ without any restrictions on the Banach space $X$.
\end{abstract}

\date{}

\subjclass[]{}

\keywords{}

\maketitle
\thispagestyle{empty}

\section{Introduction}
%\subsection{Motivation}

The most popular definition of integrability for functions with values in a Banach space is the Bochner integrability, which is a direct generalization of the ordinary Lebesgue integration, see a very nice short exposition in \cite[Chapter 5]{BenLind}. Bochner integrability assumes measurability, separability of the set of values outside of values on a set of measure 0, and Lebesgue integrability of the real function $t \mapsto \|f(t)\|$. This type of integration has very nice properties, but is restrictive. In particular, some Riemann integrable functions are not integrable in Bochner sense. Let us recall the definition and the corresponding well-known illustrative example (\cite{Kadets1988}, \cite[Pages 119-120]{Kadets}).

\begin{definition}
 Let $T = \{\Delta_i, t_i \}_{i=1}^n$ be a tagged partition of $[0,1]$ ($t_i \in \Delta_i$). Denote $d(T) = \max\limits_{1\leq i \leq n} |\Delta_i|$. For a bounded function $f\colon [0;1]\rar X$ define the corresponding \textit{Riemann} sum as $S(f,T) = \sum \limits_{i=1}^n |\Delta_i| F(t_i)$. The function $f$ is said to be {\emph{Riemann integrable}} if there exists an $x\in X$ (called $f$'s \emph{Riemann integral}) such that for any $\eps >0$ there is a $\del >0$ such that for any tagged partition $T$ with $d(T) <\del$ the corresponding Riemann integral sum is $\eps$-close to $x$, that is, $$\| S(f, T)-x\| <\eps. $$
 In other words, $x \in X$ is {integral} of $f$ ($x = \int\limits_0^1 f(t) \dt$) if $x = \lim \limits_{d(T) \to 0} S(f,T)$.
\end{definition}

Recall that the space $\ell_2([0;1])$ consists of all functions $x\colon [0;1]\rar \R$ with\\ $\sum_{t\in [0;1]}|x(t)|^2<\infty $ (in particular supports of these functions are at most countable subsets of $[0;1]$). The norm on $\ell_2([0;1])$ is given by $\|x\|=(\sum_{t\in [0;1]}|x(t)|^2)^{1/2}$. It is easy to see that $\ell_2([0;1])$ is a non-separable Hilbert space. The standard orthonormal basis consists of continuum number of elements $e_t :=\eins _{\{t\}}$, $t \in [0, 1]$.

Now consider the function $f\colon [0;1]\rar \ell_2([0;1])$: $f(t)=e_t$. This function is not measurable: if $A$ is a non-measurable subset of $[0;1]$, then $A$ is the preimage of $\{e_t: t \in A\}$, but the later is a closed set and hence a Borel subset of $\ell_2([0;1])$. 

However, this function is Riemann integrable. To see this, note that for any tagged partition partition $T$, if $d(T) \le \eps$ then
\begin{eqnarray*}
\|S(f,\Pi ,T)\| &=&\left\|\sum f(t_i)\left| \Del_i\right| \right\|=
\left\|\sum e_{t_i}\left| \Del_i\right| \right\|= \left(\sum \left| \Del_i\right| ^2\right)^{1/2}\\
\ & \leq& \left(\sum \eps \cdot \left|
\Del_i\right| \right)^{1/2} = \sqrt{\eps}\Bigl(\sum_i\left| \Del_i\right| \Bigr)^{1/2}=\sqrt{\eps}.
\end{eqnarray*}
Thus $f$ is Riemann integrable with the zero integral.

In reality, outside of Bochner and Riemann integrals, there are a lot of other useful non-equivalent integrability concepts for functions with values in a Banach space: Pettis integral, Gelfand integral, Birkhoff integral, McShane integral, Henstock--Kurzweil integral, and others.%, whose definitions can be easily found by internet search. 
The relationship between various kinds of integrability is a huge and popular area of research.

A substantial part of modern integration theory deals with set-valued functions. The most popular approaches to set-valued integration belong to Nobel laureates in economics Aumann~\cite{aum} (through selectors) and Debreu~\cite{dbr} (through R{\aa}dstr\"om embedding which maps convex compact sets into points of the space of bounded functions on $B_{X^*}$, after which the multifunction transforms into an ordinary function and the ordinary Bochner integration is applicable). These approaches are proved to be very useful in various areas like optimization, differential inclusions and mathematical economics. Both approaches combined with different kinds of integrals for ordinary Banach space-valued functions lead to a number of integrals for set-valued functions, and, moreover, some vector-valued integrals like Pettis or Gelfand integral have direct generalizations to the set-valued case, that are not equivalent to those, obtained from Aumann's and Debreu's approaches (\cite{cas-kad-rod, cas-kad-rod-2, cas-kad-rod-3, cas-rod-2} and citations therein). For a detailed account on measurable selection results and set-valued integration we refer the reader to the monographs \cite{cas-val,kle-tho} and the survey~\cite{hes-J}.

In this paper we concentrate on the Riemann integration of set-valued functions. The letter $X$ is reserved for Banach spaces. We consider only real Banach spaces and use the standard Banach space theory notation like $X^*$ for the dual space, $L(X,Y)$ for the space of bounded linear operators from $X$ to $Y$, or $B_X$ for the closed unit ball without additional explanation. Below a \textit{set-valued function} (another name -- \textit{multifunction}) with values in $X$ is a map $F: [0, 1] \to 2^{X} \setminus \{\varnothing\}$.

If $X$ is a normed space, we can add elements of $2^X$ and multiply them by a constant in a natural way: for subsets $A, B \subset X$ their (Minkowski) sum is $A+B := \{a+b \colon a\in A, b\in B \}$, and if $\lambda$ is a scalar, $\lambda A := \{\lambda a \colon a\in A\}$.

Denote by $\bb X$ the collection of all not empty bounded subsets of $X$ and $\bc X$ the subcollection of all closed bounded subsets. The convergence in $\bb X$ will be considered with respect to the Hausdorff distance $\hausd$:
\begin{definition}
 The \textit{one-sided Hausdorff distance} between sets $A, B \in \bb X$ is the number $\whausd(A,B) = \sup_{b\in B} \rho(b,A)$. The \textit{Hausdorff distance} between $A$ and $B$ is the number 
$$
\hausd(A,B) = \max \{ \whausd(A,B), \whausd(B,A)\}.
$$ 
\end{definition}
Remark, that $\hausd$ is a pseudometric on $\bb X$ but is not a metric: $\hausd(A, \overline{A}) = 0$ for every $A \in \bb X$. This means that the limit in Hausdorff distance is not uniquely defined. In order to eliminate this inconvenience, below in the definition of the Riemann integral of a multifunction we demand that integral belongs to $\bc X$: since for two different closed sets the Hausdorff distance is non-zero, this makes the integral unique.
\begin{definition}
For a tagged partition $T = \{\Delta_i, t_i \}_{i=1}^n$ of $[0,1]$ and a multifunction $F: [0,1] \to \bb X$ define the corresponding \textit{Riemann sum} $S(F,T) = \sum \limits_{i=1}^n |\Delta_i| F(t_i)$. Then a set $A \in \bc X$ is said to be the \textit{Riemann integral} of $F$ ($A = \int\limits_0^1 F(t) \dt$) if $A = \lim \limits_{d(T) \to 0} S(F,T)$, where the limit is taken in the sense of the Hausdorff distance. If the Riemann integral exists, $F$ is called \textit{Riemann integrable}.
\end{definition} 

We were able to trace this natural definition of the multi-valued Riemann integral to \cite{Hukuhara} for multifunctions with convex compact values and to \cite{Polovin1974} without the convexity assumption.

\begin{remark}
Since in our paper we consider only Riemann integrability, below we use words ``integral'' and ``integrable'' in the sense of ``Riemann integral'' and ``Riemann integrable''.
\end{remark}

\begin{definition}
Let $F: [0,1] \to 2^X$ be a multifunction. Then the \textit{convex hull} of $F$ is the multifunction $\conv F: [0,1] \to 2^X: t \mapsto \conv(F(t))$.
\end{definition}

\begin{theorem} \label{conv_integr}
Let $F$ be an integrable multifunction with $\int\limits_0^1 F(t) \dt = A$ . Then its convex hull $\conv F$ is also integrable and $\integral{\conv F(t)} = \overline{\conv} A$.
\end{theorem}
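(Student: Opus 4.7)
The plan is to reduce the theorem to two elementary observations about how the convex hull interacts with Minkowski sums and with the Hausdorff distance; after that the conclusion is essentially a one-line computation.

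First I would verify that for every tagged partition $T$ the Riemann sum of $\conv F$ is literally the convex hull of the Riemann sum of $F$, namely
\[
S(\conv F, T) \;=\; \conv S(F,T).
\]
This rests on the two Minkowski-type identities $\lambda\,\conv(B) = \conv(\lambda B)$ for $\lambda \ge 0$ and $\conv(A) + \conv(B) = \conv(A+B)$. Both are immediate from the definition of a convex combination: the second one follows by rewriting the sum of convex combinations $\sum_i \lambda_i a_i + \sum_j \mu_j b_j$ as a single convex combination of the $a_i + b_j$ with product weights $\lambda_i\mu_j$. Iterating over the partition gives the displayed equality.

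Next I would show that the convex hull is a $1$-Lipschitz operation for the Hausdorff pseudometric, namely $\hausd(\conv B_1, \conv B_2) \le \hausd(B_1, B_2)$. Given $x = \sum \lambda_i b_i \in \conv B_2$ and $\eps > 0$, for each $b_i$ I pick $a_i \in B_1$ with $\|b_i - a_i\| < \whausd(B_1,B_2) + \eps$; the point $\sum \lambda_i a_i$ lies in $\conv B_1$ and is within $\whausd(B_1,B_2) + \eps$ of $x$, which yields $\whausd(\conv B_1,\conv B_2) \le \whausd(B_1,B_2)$, and the symmetric estimate closes the argument.

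Finally, since the Hausdorff pseudometric does not distinguish a set from its closure, the hypothesis $\hausd(S(F,T), A) \to 0$ as $d(T) \to 0$ combined with the two previous observations gives
\[
\hausd\bigl(S(\conv F, T),\, \cconv A\bigr) \;=\; \hausd\bigl(\conv S(F, T),\, \cconv A\bigr) \;\le\; \hausd(S(F,T), A) \;\to\; 0.
\]
As $\cconv A \in \bc X$, this is exactly the statement that $\conv F$ is Riemann integrable with integral $\cconv A$. There is no real obstacle here; the only bookkeeping point is that to ensure uniqueness the integral must land in $\bc X$, which is why one passes from $\conv A$ to the closed convex hull $\cconv A$, a transition that costs nothing thanks to the pseudometric nature of $\hausd$.
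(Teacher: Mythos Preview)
Your argument is correct and follows exactly the paper's route: identify $S(\conv F,T)=\conv S(F,T)$ via the Minkowski identity, invoke the inequality $\hausd(\conv U,\conv V)\le\hausd(U,V)$, and conclude. The only difference is that you spell out proofs of these two auxiliary facts, which the paper simply states as evident.
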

\begin{proof}
Since the convex hull of Minkowski sum is the Minkowski sum of convex hulls, $S(\conv F, T) = \conv S(F,T)$ for every tagged partition $T$ of $[0,1]$. 
It remains to apply the evident inequality
\begin{equation*} \label{eq-ev-ineq}
\hausd(\conv U, \conv V) \leq \hausd(U, V)
\end{equation*}
which is true for all $U, V \in \bb X$:
$$
\hausd (S(\conv F,T),\cconv A) = \hausd (\conv S(F,T),\conv A) \leq \hausd (S(F,T), A) \xrightarrow[d(T)\to 0]{} 0.
$$
So $\integral{\conv F(t)} = \cconv A$. 
\end{proof}

The converse to Theorem \ref{conv_integr} result together with the stronger equality 
\begin{equation} \label{eq-int-conv-hull}
\integral{\conv F(t)} = \integral{F(t)}
\end{equation}
was demonstrated in \cite[Theorem 10]{Polovin1983} under additional assumption of finite-dimensionality of $X$. Later it was extended to functions with compact values under assumption that $X$ is superreflexive (see \cite[Corollary 3.1]{Iv-Pol2012} and the discussion around it).

The main results of our paper are:
\begin{enumerate}[{1.}]
 \item The integral of any Riemann integrable multifunction is convex.
 \item The converse to Theorem \ref{conv_integr} holds true in any Banach space $X$ with an infratype $p > 1$ (so-called B-convex spaces, see Definition \ref{def-infratype}).
 \item If a Banach space $X$ is not B-convex then there exists a multifunction $F: [0,1] \to \bb X$ which is not Riemann integrable, but whose convex hull is Riemann integrable.
 \item In any Banach space $X$ if the convex hull of multifunction $F: [0,1] \to \bb X$ is Riemann integrable with compact $\integral{\conv F(t)}$, then $F$ is Riemann integrable and \eqref{eq-int-conv-hull} holds true. In particular, this happens if all the values of $F$ are precompact.
\end{enumerate}

The structure of the paper is as follows. In the next section ``\nameref{sect2}'' we list elementary properties of integral and demonstrate the first (and simplest) of the aforementioned main results. After that, in the section ``\nameref{sect3}'' we give the necessary preliminaries from the Banach space theory and demonstrate the second and the third of the main results; and the last section ``\nameref{sect4}'' is devoted to the last result from the above list.

\section{The convexity of integral} \label{sect2}

Let us list without proof some easy-to check properties.
 
\begin{prop} \label{prop21++}
Let $F: [0,1] \to \bb X$ be a Riemann integrable multi-valued function with 
$\int\limits_0^1 F(t) {\dt} = A$ and $P \in L(X,Y)$. Then $P \circ F$ $($the multifunction $t \mapsto P(F(t))$$)$ is also integrable and $\integral{P(F(t))} = \overline{P(A)}$. 
\end{prop}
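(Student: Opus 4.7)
The plan is to reduce everything to two elementary properties of a bounded linear map: linearity respects the Minkowski operations, and $P$ is Lipschitz with constant $\|P\|$, which upgrades to a Lipschitz estimate for the Hausdorff distance. First I would record: for all $U,V \subset X$ and scalars $\lambda$, linearity gives $P(U+V) = P(U) + P(V)$ and $P(\lambda U) = \lambda P(U)$; and for all $U,V \in \bb X$ the estimate
\[
\hausd(P(U),P(V)) \leq \|P\| \cdot \hausd(U,V)
\]
holds. The last inequality is immediate once one observes that for every $v \in V$, $\rho(P(v), P(U)) = \inf_{u \in U}\|P(v-u)\| \leq \|P\|\,\rho(v,U)$; taking the supremum over $v \in V$ bounds $\whausd(P(V),P(U))$ by $\|P\|\,\whausd(V,U)$, and symmetrically.

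Combining the two facts, for any tagged partition $T = \{\Delta_i, t_i\}_{i=1}^n$ the Riemann sums intertwine with $P$:
\[
S(P\circ F, T) \;=\; \sum_{i=1}^n |\Delta_i|\, P(F(t_i)) \;=\; P\!\left(\sum_{i=1}^n |\Delta_i| F(t_i)\right) \;=\; P(S(F,T)).
\]
The Lipschitz bound then gives
\[
\hausd\bigl(S(P\circ F, T),\, P(A)\bigr) \;=\; \hausd\bigl(P(S(F,T)), P(A)\bigr) \;\leq\; \|P\|\cdot \hausd(S(F,T), A),
\]
and the right-hand side tends to $0$ as $d(T) \to 0$ by the integrability of $F$.

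The only subtlety is that the integral is required to live in $\bc Y$, but $P(A)$ need not be closed (even though it is bounded, since $A$ is and $P$ is continuous). This is overcome in the same way as in the proof of Theorem \ref{conv_integr}: passing to $\overline{P(A)}$ does not disturb the limit because $\hausd(P(A), \overline{P(A)}) = 0$, so
\[
\hausd\bigl(S(P\circ F, T), \overline{P(A)}\bigr) \;\leq\; \hausd\bigl(S(P\circ F, T), P(A)\bigr) \;\xrightarrow[d(T)\to 0]{}\; 0,
\]
and therefore $\int_0^1 P(F(t))\,\dt = \overline{P(A)}$. I do not foresee a genuine obstacle here; the proof is essentially a template copy of the argument given for $\conv F$, with the Lipschitz constant $1$ of the convex hull operation replaced by $\|P\|$, and with the same closure trick to force uniqueness of the integral.
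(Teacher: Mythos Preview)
Your proof is correct. The paper does not actually prove this proposition: it is one of three ``easy-to-check properties'' that the authors explicitly list \emph{without proof} at the start of Section~\ref{sect2}. Your argument (linearity of $P$ on Minkowski sums to get $S(P\circ F,T)=P(S(F,T))$, the Lipschitz bound $\hausd(P(U),P(V))\le\|P\|\,\hausd(U,V)$, and the closure adjustment $\hausd(P(A),\overline{P(A)})=0$) is exactly the routine verification the authors are omitting, so there is nothing substantive to compare.
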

\begin{prop}
Every Riemann integrable multifunction is bounded. That is, if $F$ is integrable, then
$\sup\{\|a\| \colon a \in F(t), t \in [0,1]\} < \infty.$
\end{prop}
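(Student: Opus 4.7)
The plan is a tag-swap argument. Let $A := \integral{F(t)} \in \bc{X}$; by definition of the integral, $A$ is a bounded set, so $M := \sup_{a \in A}\|a\|$ is finite. Applying the definition of Riemann integrability with $\varepsilon = 1$ yields $\delta > 0$ such that every tagged partition $T$ with $d(T) < \delta$ satisfies $\hausd(S(F,T), A) < 1$. Since the one-sided distance $\whausd(A, S(F,T))$ is also $< 1$, every element of the Minkowski sum $S(F,T)$ lies within $1$ of $A$ and hence has norm at most $M+1$.

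Next, fix one reference tagged partition $T_0 = \{\Delta_i, t_i\}_{i=1}^n$ with $d(T_0) < \delta$ and fix reference selectors $x_i \in F(t_i)$, so that $\bigl\|\sum_{i=1}^n |\Delta_i|\, x_i\bigr\| \le M+1$. For an arbitrary $t \in [0,1]$, choose $j$ with $t \in \Delta_j$ and form $T_1$ by replacing only the tag $t_j$ by $t$; since the intervals are unchanged, $d(T_1) = d(T_0) < \delta$. For any $y \in F(t)$, the vector $\sum_{i\neq j}|\Delta_i|\, x_i + |\Delta_j|\, y$ belongs to $S(F, T_1)$ and therefore has norm at most $M+1$. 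Subtracting the two Riemann-sum bounds gives $|\Delta_j|\,\|y - x_j\| \le 2(M+1)$, whence
$$\|y\| \le \|x_j\| + \frac{2(M+1)}{|\Delta_j|} \le \max_{1 \le i \le n} \left( \|x_i\| + \frac{2(M+1)}{|\Delta_i|} \right).$$
The right-hand side is a finite constant independent of $t$ and $y$, which is the desired uniform bound.

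No step is a serious obstacle: one only needs to check that modifying a single tag preserves admissibility of the partition (which is clear, because $d(T_1)$ depends only on the interval lengths) and that the one-sided bound $\whausd(A, S(F,T)) < 1$ translates into the pointwise estimate $\|b\| \le M+1$ for every $b \in S(F,T)$, which is immediate from the boundedness of $A$. The essential idea is that a single tag carries a full weight $|\Delta_j| > 0$ in the Riemann sum, so an unbounded $F(t)$ with $t \in \Delta_j$ would produce Riemann sums of arbitrarily large norm, contradicting integrability.
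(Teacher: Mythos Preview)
Your argument is correct. The paper does not actually supply a proof of this proposition: it is listed among several ``easy-to-check properties'' stated without proof, so there is nothing to compare your approach against. Your tag-swap argument is the standard one and is carried out cleanly; the only minor remark is that the hypothesis $F\colon[0,1]\to\bb X$ already forces each individual value $F(t)$ to be bounded, so what you are really proving (and what the proposition intends) is the \emph{uniform} bound in $t$, which is exactly what your final displayed inequality delivers.
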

\begin{prop} \label{prop-const-int1}
If $A \in \bb{X}$ is convex and $F(t) = A$ for all $t \in [0,1]$, then $\integral{F} = \overline{A}.$
\end{prop}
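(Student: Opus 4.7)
The plan is to compute the Riemann sums directly and observe they do not depend on the partition at all. For any tagged partition $T = \{\Delta_i, t_i\}_{i=1}^n$ of $[0,1]$, since $F(t_i) = A$ for every $i$, we have
$$S(F, T) = \sum_{i=1}^n |\Delta_i|\, A.$$
The key step is the following elementary fact about convex sets: if $A \subset X$ is convex and $\lambda_1, \ldots, \lambda_n \geq 0$ satisfy $\sum_{i=1}^n \lambda_i = 1$, then $\sum_{i=1}^n \lambda_i A = A$. The inclusion $\sum \lambda_i A \subset A$ is exactly convexity of $A$, and the reverse inclusion $A \subset \sum \lambda_i A$ follows by writing $a = \sum \lambda_i a$ for $a \in A$. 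Since $\sum_{i=1}^n |\Delta_i| = 1$, this gives $S(F, T) = A$ for every tagged partition $T$.

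Therefore, for every $T$,
$$\hausd(S(F, T), \overline{A}) = \hausd(A, \overline{A}) = 0,$$
so trivially $\lim_{d(T) \to 0} S(F, T) = \overline{A}$ in the Hausdorff pseudometric. Since $\overline{A} \in \bc X$ (because $A$ is bounded, hence so is $\overline{A}$), this is precisely the Riemann integral of $F$, i.e.\ $\integral{F} = \overline{A}$.

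There is really no obstacle here: the whole argument is a one-line computation once the convexity identity $\sum \lambda_i A = A$ is recorded. The only minor subtlety worth pointing out is the reason we pass to $\overline{A}$ rather than keeping $A$: the definition requires the integral to lie in $\bc X$, and since the Hausdorff pseudometric does not distinguish a bounded set from its closure, the uniquely determined closed representative is $\overline{A}$.
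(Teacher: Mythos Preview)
Your argument is correct and is precisely the intended one: the paper lists this proposition among ``easy-to-check properties'' and does not spell out a proof, so your direct computation of the Riemann sum via the convexity identity $\sum_i \lambda_i A = A$ is exactly what is meant.
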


\begin{theorem} \label{thm-conv}
The integral of any Riemann integrable multifunction is convex.
\end{theorem}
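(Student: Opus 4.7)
The plan is to reduce convexity to midpoint convexity (which is sufficient since $A \in \bc X$ is closed) and to obtain midpoint convexity by a tag-splitting trick at the level of Riemann sums.

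Fix $x_1, x_2 \in A$ and $\eps > 0$. Choose $\delta > 0$ so small that $\hausd(S(F,T), A) < \eps$ for every tagged partition $T$ with $d(T) < \delta$. The first key observation is that we may restrict attention to \emph{midpoint-tagged} partitions $T = \{\Delta_i, t_i\}_{i=1}^n$ with $d(T) < \delta$, where each $t_i$ is the midpoint of $\Delta_i$. For such a $T$, since $\whausd(A, S(F,T)) < \eps$, there exist Riemann sum elements $s_1, s_2 \in S(F,T)$ with $\|s_k - x_k\| < \eps$, and we may write $s_k = \sum_i |\Delta_i| f_{k,i}$ with $f_{k,i} \in F(t_i)$ for $k=1,2$.

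Now refine $T$ into $T'$ by splitting each $\Delta_i$ at its midpoint $t_i$ into the left half $\Delta_i^-$ and the right half $\Delta_i^+$; because $t_i$ is the common endpoint, it belongs to both halves and is a legitimate tag for each. Thus $T' = \{\Delta_i^-, t_i;\, \Delta_i^+, t_i\}_{i=1}^n$ is a valid tagged partition with $d(T') = d(T)/2 < \delta$. In $S(F, T')$ we are free to choose $f_{1,i} \in F(t_i)$ for $\Delta_i^-$ and $f_{2,i} \in F(t_i)$ for $\Delta_i^+$; the corresponding element is
\[
\sum_{i=1}^n \frac{|\Delta_i|}{2} f_{1,i} + \sum_{i=1}^n \frac{|\Delta_i|}{2} f_{2,i} = \frac{s_1 + s_2}{2},
\]
so $\frac{s_1+s_2}{2} \in S(F, T')$. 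Since $d(T') < \delta$, the point $\frac{s_1+s_2}{2}$ lies within $\eps$ of some element of $A$, while $\bigl\|\frac{s_1+s_2}{2} - \frac{x_1+x_2}{2}\bigr\| < \eps$; hence $\dist\bigl(\frac{x_1+x_2}{2}, A\bigr) < 2\eps$. Letting $\eps \to 0$ and invoking closedness of $A$ yields $\frac{x_1+x_2}{2}\in A$, i.e.\ midpoint convexity, and combined with closedness this gives full convexity.

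The only real subtlety is step of ensuring that the refined partition produces exactly the weights $\tfrac12, \tfrac12$: if the tag $t_i$ were arbitrary inside $\Delta_i$, the split at $t_i$ would give unequal subintervals and thus a non-midpoint convex combination. Using a midpoint-tagged $T$ (which is permitted since the $\delta$-condition applies to \emph{all} partitions of sufficiently small mesh) removes this issue, and the remainder of the argument is a direct application of $\hausd(S(F,T'), A) < \eps$ together with the definition of Hausdorff distance.
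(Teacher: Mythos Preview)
Your argument is correct and uses essentially the same bisection/tag-splitting idea as the paper: reduce to midpoint convexity of the closed set $A$ and obtain it by relating a Riemann sum on a partition to the average of two Riemann sums after halving each interval. The paper runs the trick in the opposite direction (refine first, then reuse the fine tags as two tag choices on the coarse partition, concluding $\hausd\bigl(A,\tfrac12 A+\tfrac12 A\bigr)=0$ at the set level), whereas you start from a midpoint-tagged coarse partition and refine using the midpoint as tag in both halves, working pointwise with fixed $x_1,x_2$; one minor notational slip is that, under the paper's convention $\whausd(A,B)=\sup_{b\in B}\rho(b,A)$, the inequality you need to extract $s_1,s_2$ is $\whausd\bigl(S(F,T),A\bigr)<\eps$ rather than $\whausd\bigl(A,S(F,T)\bigr)<\eps$, but this is harmless since you already have the full $\hausd<\eps$.
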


\begin{proof}
Let $F$ be a Riemann integrable multifunction, $A = \integral{F(t)}$. Since $A$ is closed, in order to demonstrate its convexity it is sufficient to prove that $\frac12 A + \frac12 A \subset A$.

Consider a sequence of partitions $ \Pi_n =\{\Delta_{k,n}\}_{k=1}^{N_n}$ with $\lim_{ n \to \infty}d(\Pi_n) = 0,$. Let $\Delta_{k,n} = [x_{(k-1),n}, x_{k,n}]$. 
Define a sequence of partitions $\widetilde \Pi_{n} = \{\widetilde{\Delta}_{i,n}\}_{i=1}^{2N_n}$, where 
$$
\widetilde{\Delta}_{2k-1, n} = \left[x_{(k-1),n}, \frac12 x_{(k-1),n} + \frac12 x_{k,n}\right], \, \widetilde{\Delta}_{2k,n} = \left[\frac12 x_{(k-1),n} + \frac12 x_{k,n}, x_{k,n}\right],
$$
that is we divide every segment of the partition $\Pi_n$ into two segments of equal length. 
Selecting arbitrary points $t_{i,n} \in \widetilde{\Delta}_{i,n}$ we define the sequence of tagged partitions $\widetilde{T}_n = \{\widetilde{\Delta}_{i,n}, t_{i,n}\}_{i=1}^{2N_n}$. Now the main trick comes: both points $ t_{2k-1,n}$ and $t_{2k,n}$ lie in the original bigger interval $\Delta_{k,n}$, so for a given $n \in \N$ we can define two tagged partitions based on the same selection of segments $\{\Delta_{k,n}\}_{k=1}^{N_n}$, namely $T_{a,n} = \{\Delta_{k,n}, t_{2k-1,n}\}_{k=1}^{N_n}$ and $T_{b,n} = \{\Delta_{k,n}, t_{2k,n}\}_{k=1}^{N_n}$. 
Then, for the corresponding Riemann sums 
$$S_{a,n} = S(F,T_{a,n}), S_{b,n} = S(F,T_{b,n}) \text{ and } S_n = S(F,T_n)$$
we have that $S_n = \frac12 S_{a,n} + \frac12 S_{b,n}$, so
\begin{equation} \label{eq-riem-sum12}
\rho_H\left(S_n, \frac12 S_{a,n} + \frac12 S_{b,n}\right) = 0.
\end{equation}
By the integrability, all the three limits $\lim_{ n \to \infty} S_n$, $\lim_{ n \to \infty} S_{a,n}$, and $\lim_{ n \to \infty} S_{b,n}$ exist and are equal to $A$, so passing in \eqref{eq-riem-sum12} to the limit we obtain that 
$$
\rho_H\left(A, \frac12 A + \frac12 A \right) = 0, 
$$
which means that $\overline{\frac12 A + \frac12 A} = A$, which gives the desired inclusion $\frac12 A + \frac12 A \subset A$. 
\end{proof}
Combining this proposition with Theorem \ref{conv_integr} we get:
\begin{corollary} \label{cor25u}
Suppose $F$ is a Riemann integrable multifunction. Then $\conv F$ is also integrable and $\integral{\conv F(t)} = \integral{F(t)}$.
\end{corollary}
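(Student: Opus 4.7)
The plan is to obtain the corollary as a direct consequence of Theorem \ref{conv_integr} combined with Theorem \ref{thm-conv}. Since $F$ is assumed Riemann integrable, let $A := \integral{F(t)}$. Theorem \ref{conv_integr} immediately yields that $\conv F$ is Riemann integrable with $\integral{\conv F(t)} = \cconv A$. So the only thing left to check is that $\cconv A = A$.

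For this, observe that by the definition of the Riemann integral of a multifunction the set $A$ lies in $\bc X$, so in particular $A$ is closed. By Theorem \ref{thm-conv}, $A$ is also convex. A closed convex set coincides with its own closed convex hull, hence $\cconv A = A$, which gives $\integral{\conv F(t)} = A = \integral{F(t)}$.

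There is no real obstacle here: the nontrivial content has already been done in Theorem \ref{thm-conv} (convexity of the integral) and Theorem \ref{conv_integr} (integrability of $\conv F$ and identification of its integral as $\cconv A$). The corollary is simply the observation that once the integral is known to be both closed and convex, the ``$\cconv$'' in Theorem \ref{conv_integr} becomes redundant, upgrading the conclusion from $\integral{\conv F(t)} = \cconv \integral{F(t)}$ to the equality $\integral{\conv F(t)} = \integral{F(t)}$ that is asserted.
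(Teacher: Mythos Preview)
Your proof is correct and follows exactly the approach indicated in the paper, which simply states that the corollary is obtained by combining Theorem~\ref{thm-conv} with Theorem~\ref{conv_integr}. You have merely spelled out the one-line observation that since $A$ is closed (by definition of the integral) and convex (by Theorem~\ref{thm-conv}), one has $\cconv A = A$.
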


\section{The connection between a function and its convex hull} \label{sect3}

\begin{definition}\label{def-infratype}
 A Banach space $X$ is said to be of infratype $p$ with a constant $C> 0$ if for every finite set $\{x_k\}_{k=1}^n \subset X$ the following inequality holds:
 $$\min\limits_{\alpha_i = \pm 1} \left\|\sum\limits_{i=1}^{n} \alpha_i x_i\right\| \leq C\left(\sum\limits_{i=1}^n \|x_i\|^p \right)^{1/p}.$$
\end{definition}
In the sequel, we say that $X$ is of infratype $p$ if there exists a $C>0$ such that $X$ is of infratype $p$ with the constant $C$. The theory of type and infratype was originated by Gilles Pisier \cite{Pisier} and is an important part of local theory of Banach spaces, see the survey \cite{Pisier2}, and books \cite[Sections II.E, III.A]{Wojt} and \cite[Chapter 5]{Kadets}. It is easy to see that

\begin{itemize}
 \item any Banach space is of infratype $1$ (for this reason the infratype 1 is called \emph{the trivial infratype});
\item infratype is an isomorphic invariant;
 \item any Hilbert space is of infratype $2$, consequently finite-dimensional spaces have infratype $2$ (in reality, finite dimensional spaces have a much stronger property \cite[Lemma 2.2.1]{Kadets});
 \item the space $\ell_1$ does not have a non-trivial infratype.
\end{itemize}

In this section we are going to prove the following theorem: 
\begin{theorem}
Let $X$ be a Banach space. Then the following assertions are equivalent: 
\begin{enumerate}[{$($1$ )$}]
 \item $X$ has an infratype $p > 1$;
 \item For every multifunction $F: [0,1] \to \bb{X}$ if $\conv F$ is integrable, then $F$ is integrable itself.
\end{enumerate}
\end{theorem}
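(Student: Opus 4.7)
The theorem is an equivalence and I would establish the two implications separately.

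For the direction $(1) \Rightarrow (2)$, suppose $X$ has infratype $p > 1$ and $\conv F$ is Riemann integrable with integral $B$. Since $S(F,T) \subset \conv S(F,T) = S(\conv F,T)$ and the latter converges to $B$ in Hausdorff distance, the one-sided bound $\whausd(B, S(F,T)) \to 0$ is automatic (every Riemann sum of $F$ lies close to $B$), and together with Theorem~\ref{thm-conv} and Corollary~\ref{cor25u} this identifies $B$ as the only possible candidate for $\integral{F(t)}$. The content is therefore the reverse one-sided bound $\whausd(S(F,T), B) \to 0$, i.e.\ every point of $B$ is approximable by a point of $S(F,T)$ uniformly as $d(T) \to 0$. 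I would reduce this to a quantitative selection lemma: writing $M = \sup\{\|a\| : a \in F(t),\ t \in [0,1]\}$ (finite because $\conv F$ is bounded), for every tagged partition $T = \{\Delta_i, t_i\}$ and every choice $y_i \in \conv F(t_i)$ one can find $a_i \in F(t_i)$ with
\[
\Bigl\| \sum_i |\Delta_i|(a_i - y_i) \Bigr\| \le C' M \, d(T)^{(p-1)/p}.
\]
I would prove this lemma by randomization: write $y_i = \sum_k \lambda_{i,k} a_{i,k}$ as a finite convex combination with $a_{i,k} \in F(t_i)$, draw $K_i$ independently with $P(K_i = k) = \lambda_{i,k}$, and note that the vectors $|\Delta_i|(a_{i,K_i} - y_i)$ are independent, mean zero, and of norm at most $2M|\Delta_i|$. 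By Pisier's theorem, nontrivial infratype implies nontrivial Rademacher type $q > 1$ (possibly with $q < p$); applying the type-$q$ inequality (together with a standard symmetrization) yields
\[
E \Bigl\| \sum_i |\Delta_i|(a_{i,K_i} - y_i) \Bigr\|^q \le C_q^q (2M)^q \sum_i |\Delta_i|^q \le C_q^q (2M)^q \, d(T)^{q-1},
\]
and a deterministic realization beats the expectation, giving the lemma with $q$ in place of $p$.

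For the direction $(2) \Rightarrow (1)$ I would argue by contrapositive. If $X$ has only the trivial infratype, then by the Giesy--James characterization $X$ contains $\ell_1^n$ uniformly: for each $n$ there exist norm-one vectors $x_1^{(n)}, \ldots, x_n^{(n)} \in X$ with $\|\sum_i \alpha_i x_i^{(n)}\| \ge (1 - 1/n) \sum_i |\alpha_i|$ for all scalars $\alpha_i$. The goal is to build a bounded $F : [0,1] \to \bb X$ whose convex hull is Riemann integrable but $F$ itself is not. The plan is to partition $[0,1]$ into blocks $J_n$ of lengths $\ell_n$ summing to $1$, and on each $J_n$ deploy the $\ell_1^n$-copy so that (i)~the Minkowski-sum structure of $S(\conv F, T)$ assembles into a fixed closed convex body in the Hausdorff limit as $d(T) \to 0$, while (ii)~at the ``critical'' block index $n_* = n_*(d(T))$ where the effective subblock scale matches $d(T)$, the Shapley--Folkman-type obstruction activates: the midpoint of a convex combination in $S(\conv F, T)$ cannot be matched by any selection in $S(F, T)$ more closely than $\sim (1 - 1/n_*)\ell_{n_*}/2$, directly by the $\ell_1^{n_*}$-inequality. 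A delicate point is that a naively piecewise-constant $F$ on subblocks would allow scalar averaging within subblocks and kill the gap, so the construction must ensure the multifunction accesses many distinct $\ell_1^n$-directions even at the subblock scale --- for instance via a non-measurable indexing of the $x_i^{(n)}$'s along $J_n$ in the spirit of the Kadets $t \mapsto e_t$ example recalled in the introduction.

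The main obstacle is the selection lemma in $(1) \Rightarrow (2)$: converting a convex combination representation $y_i = \sum_k \lambda_{i,k} a_{i,k}$ into a single honest selection $a_i \in F(t_i)$ with small aggregate error is exactly where nontrivial infratype, via Pisier's bridge to Rademacher type, does the essential work, and where the rate $d(T)^{(q-1)/q}$ comes from. For the converse direction the technical heart is the simultaneous calibration of the block lengths $\ell_n$ and the within-block indexing: the zonotope $S(\conv F, T)$ must form a Cauchy family in Hausdorff distance --- not merely have vanishing diameter --- while the $\ell_1^n$-gap between $S(F,T)$ and its convex hull is preserved along a refining sequence of partitions.
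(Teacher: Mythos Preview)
Your argument for $(1)\Rightarrow(2)$ is correct and close in spirit to the paper's, with one difference worth noting: the paper invokes a deterministic selection lemma (Lemma~\ref{lsum}) that works directly from the infratype inequality via a dyadic sign-choice argument, without passing through Pisier's equivalence of B-convexity with nontrivial Rademacher type. Your randomization route is a legitimate alternative and yields the same rate $d(T)^{(q-1)/q}$, at the cost of importing the (deeper) infratype$\,\Leftrightarrow\,$type theorem; the paper's approach is more self-contained.

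For $(2)\Rightarrow(1)$ your sketch has a real gap, and the difficulty you flag as the ``technical heart'' is in fact the wrong battle. A time-varying $F$ built on blocks $J_n$ with values in disjoint $\ell_1^n$-copies forces you to prove that $\conv F$ is Riemann integrable, i.e.\ that the Minkowski sums $\sum_n \ell_n \conv F|_{J_n}$ form a Cauchy family in Hausdorff distance; with the $\ell_1^n$-copies living in unrelated directions this is not automatic and your outline does not indicate how to secure it. The paper sidesteps this entirely by taking $F$ \emph{constant}: it builds a single countable set $\{e_j\}_{j\in\mathbb{N}}\subset X$ and sets $F(t)=\{e_j\}$ for all $t$, so that $\conv F$ is constant and integrable by Proposition~\ref{prop-const-int1}. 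The work then moves to the other side: one must show that for partitions into $N$ equal pieces, certain barycenters $y_n\in\cconv\{e_j\}$ stay uniformly far from the Riemann sum $S_n$. This is where the real construction lies: the $e_j$ are grouped into blocks spanning subspaces $E_k$ with $d(E_k,\ell_1^{2^{k-1}})<2$, chosen recursively (Lemma~\ref{lem-chast-teor}) so that each $E_k$ sits inside a finite-codimensional $G_{k-1}$ that is $\eps$-orthogonal to $E_1+\cdots+E_{k-1}$. The almost-orthogonality lets one isolate the $E_{n+1}$-component of $y_n-s$ and apply the $\ell_1$-lower bound there, yielding a gap $\ge 1/24$ independent of $n$. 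Your suggestion of a non-measurable indexing in the spirit of $t\mapsto e_t$ is a red herring: no non-measurability is needed, and the obstruction is purely combinatorial (too few summands in $S_n$ to cover all coordinates of $y_n$).
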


The implications (1)$\Rightarrow$(2) and (2)$\Rightarrow$(1) in the above theorem are of very different nature, and will be proved as two separate theorems: Theorem \ref{thm-in-infratype} and Theorem \ref{thm-without-infratype} in two different subsections of this section.

\subsection{Spaces with a non-trivial infratype}
\begin{lemma}[{\cite[Lemma before Theorem 3]{Kadets1988}, or \cite[P. 133-134, Lemma 3]{Kadets}}] \label{lsum}
Let $p > 1$, $C > 0$, and $X$ be a Banach space with infratype $p$ and constant $C$. Let $\{A_i\}_{i=1}^{n}$ be an arbitrary collection of bounded subsets of $X$, $d_i = diam(A_i)$, and $\{b_i\}_{i=1}^{n}$ be a collection of points such that $b_i \in \conv A_i$, $i = 1, \ldots, n$. 
Then one can choose points $a_i \in A_i$, so that 
$$
\left\| \sum_{i=1}^{n} (a_i - b_i) \right\| \leq C_1 \left( \sum_{i=1}^{n} d_i^p \right)^{1/p},
$$
where $C_1 = \frac{2C}{2^{1-\frac{1}{p}} - 1}$.
\end{lemma}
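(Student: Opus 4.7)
My plan has two ingredients: a reduction so that each $b_i$ is (approximately) a uniform dyadic average of points of $A_i$, followed by an iterative halving procedure in which the $\pm$ signs at every step are selected by the infratype inequality. Since $b_i\in\conv A_i$, I can write $b_i=\sum_{j=1}^{m_i}\lambda_{ij}a_{ij}$ with $a_{ij}\in A_i$ and replace each $A_i$ by the finite set $\{a_{ij}\}_j$ (which still has diameter $\le d_i$). Approximating the coefficients $\lambda_{ij}$ by dyadic rationals with a common denominator $2^k$ yields $\tilde b_i = 2^{-k}\sum_{j=1}^{2^k} c_{i,j}$ with $c_{i,j}\in A_i$ (repetitions allowed) and $\|\tilde b_i-b_i\|\to 0$ as $k\to\infty$. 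The intention is to run the halving argument below for $\tilde b_i$ at every sufficiently large $k$, obtaining $(a_i^{(k)})\in\prod_i A_i$ with $\|\sum_i(a_i^{(k)}-b_i)\|\le C_1(\sum_i d_i^p)^{1/p}+\eps_k$ where $\eps_k\to 0$; since the product $\prod_i A_i$ is now finite, pigeonhole then yields a single tuple $(a_i)$ for which the sharp ``$\le$'' bound survives in the limit.

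For the halving iteration I set $a^{(0)}_{i,j}=c_{i,j}$ and $b_i^{(0)}=\tilde b_i$. At step $m\ge 0$ I pair consecutive indices: for every $i$ and every $s\in\{1,\dots,2^{k-m-1}\}$ I pick $a_{i,s}^{(m+1)}\in\{a_{i,2s-1}^{(m)},a_{i,2s}^{(m)}\}$ via a sign $\alpha_{i,s}^{(m)}\in\{-1,+1\}$, and set $b_i^{(m+1)}=2^{-(k-m-1)}\sum_s a_{i,s}^{(m+1)}$. A direct computation yields the telescoping identity
\[
b_i^{(m+1)}-b_i^{(m)}=\frac{1}{2^{k-m}}\sum_{s=1}^{2^{k-m-1}}\alpha_{i,s}^{(m)}\bigl(a_{i,2s}^{(m)}-a_{i,2s-1}^{(m)}\bigr),
\]
in which each difference is of norm at most $d_i$. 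Applying Definition \ref{def-infratype} to the family of $n\cdot 2^{k-m-1}$ vectors $\{a_{i,2s}^{(m)}-a_{i,2s-1}^{(m)}\}_{i,s}$ lets me choose the signs so that the identity above gives the per-step estimate
\[
\Bigl\|\sum_{i=1}^n\bigl(b_i^{(m+1)}-b_i^{(m)}\bigr)\Bigr\| \le C\cdot 2^{-(k-m)(1-1/p)-1/p}\Bigl(\sum_{i=1}^n d_i^p\Bigr)^{1/p}.
\]
After $k$ halving steps $b_i^{(k)}$ is a single element of $A_i$, which I take as $a_i^{(k)}$.

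Summing the per-step estimates by the triangle inequality on the telescoping sum $a_i^{(k)}-\tilde b_i=\sum_{m=0}^{k-1}(b_i^{(m+1)}-b_i^{(m)})$ produces a geometric series in the ratio $2^{-(1-1/p)}<1$, whose total is bounded uniformly in $k$ by a constant times $(\sum_i d_i^p)^{1/p}$; an explicit computation gives the claimed $C_1=\frac{2C}{2^{1-1/p}-1}$ (in fact a slightly smaller constant suffices). I expect the conceptual heart — the halving iteration itself — to be routine once the setup is in place; the main technical obstacle will be the clean-up of the dyadic-approximation error so that the final bound holds with an exact ``$\le$'' rather than with an arbitrarily small positive slack, and this is precisely what the finite-set plus pigeonhole step accomplishes.
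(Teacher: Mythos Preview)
The paper does not actually prove Lemma~\ref{lsum}; it is quoted from the cited references \cite{Kadets1988,Kadets} and used as a black box. Your argument is correct and is precisely the classical halving/sign-selection proof given in those references: reduce each $b_i$ to a dyadic-uniform average of points of $A_i$, then at each of the $k$ halving steps invoke the infratype inequality to choose the $\pm 1$ signs, and telescope. The geometric-series computation you indicate indeed yields the constant $C\cdot 2^{-1/p}/(2^{1-1/p}-1)$, which is smaller than the stated $C_1$, so the inequality holds as claimed; your clean-up via finitely many candidate tuples and pigeonhole to pass from the approximations $\tilde b_i$ back to the exact $b_i$ is also sound. In short, your proposal matches the standard proof that the paper is citing.
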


\begin{lemma} \label{ldiam}
 Let $p>1$, $\{d_i\}_{i=1}^{n} \subset (0, +\infty)$ with $\sum_{i=1}^{n} d_i = 1$. Denote $d = \max_i d_i$. Then $\sum_{i=1}^{n} d_i^p \leq d^{p - 1}$.
\end{lemma}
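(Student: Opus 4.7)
The plan is to exploit the fact that each $d_i^p$ can be written as $d_i \cdot d_i^{p-1}$ and then bound the second factor uniformly by $d^{p-1}$, after which the hypothesis $\sum_i d_i = 1$ finishes the job. No convexity, Hölder, or power-mean inequality is needed; the statement follows from monotonicity of $x \mapsto x^{p-1}$ on $(0, \infty)$ (which uses the assumption $p > 1$ only to ensure the exponent is positive, though here it is enough that $p-1 \geq 0$).

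Concretely, I would proceed as follows. First, observe that for each $i \in \{1, \ldots, n\}$ we have $0 < d_i \leq d$, and since $p - 1 > 0$ the map $x \mapsto x^{p-1}$ is non-decreasing on $(0, +\infty)$, so $d_i^{p-1} \leq d^{p-1}$. Multiplying by $d_i > 0$ yields $d_i^{p} \leq d^{p-1} \, d_i$ for every $i$. Summing over $i = 1, \ldots, n$ and using $\sum_{i=1}^n d_i = 1$ gives
$$
\sum_{i=1}^n d_i^p \;\leq\; d^{p-1} \sum_{i=1}^n d_i \;=\; d^{p-1},
$$
which is exactly the desired inequality.

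There is no real obstacle here; the only thing worth flagging is that the estimate is tight (equality when one $d_i$ equals $d$ and the rest are negligible, or when $p = 1$ trivially), which is presumably why the lemma is stated this way: in the intended application $d$ will be the mesh of a partition, so the bound $d^{p-1}$ tends to $0$ as the mesh shrinks, and this is precisely the decay that the previous lemma's right-hand side needs in order to make the Riemann-sum argument go through.
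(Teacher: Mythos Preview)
Your proof is correct and is essentially identical to the paper's own argument: factor $d_i^p = d_i^{p-1}\cdot d_i$, bound $d_i^{p-1}\le d^{p-1}$ using $p-1>0$, and sum using $\sum_i d_i = 1$. Nothing to add.
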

%\begin{proof}
\noindent\emph{Proof}.
Since $d_i\leq d$ and $p-1>0$, $d_i^{p-1} \leq d^{p-1}$. Then 
$$\sum_{i=1}^{n} d_i^p = \sum_{i=1}^{n} d_i^{p-1} \cdot d_i \leq \sum_{i=1}^{n} d^{p-1} \cdot d_i = d^{p-1} \cdot \sum_{i=1}^{n} d_i = d^{p-1}. \eqno{\Box}$$

%\end{proof}

\begin{theorem} \label{thm-in-infratype}
Let $X$ be a space with an infratype $p>1$ with a constant $C$, $F:[0;1] \to \bb{X}$ be Riemann integrable with $\integral{\conv F(t)}= A$. Then $\integral{F(t)} = A$.
\end{theorem}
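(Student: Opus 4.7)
The plan is to show that the Riemann sums $S(F,T)$ and $S(\conv F, T)$ become arbitrarily close in Hausdorff distance as $d(T) \to 0$, which together with the assumed convergence $S(\conv F, T) \to A$ will give the desired conclusion $S(F,T) \to A$. Since the convex hull of a Minkowski sum equals the Minkowski sum of convex hulls, $S(\conv F, T) = \conv S(F,T)$, and the containment $S(F,T) \subset \conv S(F,T)$ is automatic. Therefore the whole task reduces to proving the one-sided estimate
$$
\whausd(\conv S(F,T), S(F,T)) \xrightarrow[d(T)\to 0]{} 0.
$$

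For this, I would first note that integrability of $\conv F$ implies $\conv F$, and hence $F$, is uniformly bounded (Proposition 2.2), so there is some $M>0$ with $\diam F(t) \le M$ for all $t$. Take any tagged partition $T = \{\Delta_i, t_i\}_{i=1}^n$ and an arbitrary element $y \in \conv S(F,T)$; by distributing the convex combination, $y$ has the form $\sum_{i=1}^n |\Delta_i| b_i$ with $b_i \in \conv F(t_i)$. I would apply Lemma~\ref{lsum} to the rescaled sets $A_i := |\Delta_i| F(t_i)$ (whose diameters are $|\Delta_i|\diam F(t_i) \le |\Delta_i| M$) and to the points $|\Delta_i| b_i \in \conv A_i$, obtaining $a_i \in F(t_i)$ with
$$
\left\| \sum_{i=1}^n |\Delta_i|(b_i - a_i) \right\| \le C_1 \left( \sum_{i=1}^n (|\Delta_i|\,\diam F(t_i))^p \right)^{1/p} \le C_1 M \left( \sum_{i=1}^n |\Delta_i|^p \right)^{1/p}.
$$

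Since $\sum_{i=1}^n |\Delta_i| = 1$ and $\max_i |\Delta_i| \le d(T)$, Lemma~\ref{ldiam} yields $\sum_{i=1}^n |\Delta_i|^p \le d(T)^{p-1}$, so the above bound becomes $C_1 M\, d(T)^{(p-1)/p}$. The point $\sum |\Delta_i| a_i$ lies in $S(F,T)$ and $y$ was arbitrary, so
$$
\whausd(\conv S(F,T), S(F,T)) \le C_1 M\, d(T)^{(p-1)/p},
$$
which tends to $0$ as $d(T)\to 0$ because $p > 1$. Combining with $\hausd(\conv S(F,T), A) \to 0$ via the triangle inequality for $\hausd$ gives $\hausd(S(F,T), A) \to 0$, finishing the proof.

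The substantive step is the estimate using Lemma~\ref{lsum}: this is where the non-trivial infratype enters in an essential way, and the gain $d(T)^{(p-1)/p}$ over the naive bound is exactly what makes the argument work. Everything else is routine bookkeeping with Minkowski sums and the triangle inequality for the Hausdorff distance; the boundedness of $F$ inherited from integrability of $\conv F$ also deserves brief mention, but is immediate.
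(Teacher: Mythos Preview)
Your proof is correct and essentially identical to the paper's: both reduce to the one-sided Hausdorff estimate $\whausd(S(\conv F,T),S(F,T))\to 0$, apply Lemma~\ref{lsum} to the scaled sets $|\Delta_i|F(t_i)$, and finish with Lemma~\ref{ldiam} to extract the factor $d(T)^{(p-1)/p}$. The only cosmetic difference is that you state explicitly $S(\conv F,T)=\conv S(F,T)$ before decomposing an arbitrary element, whereas the paper writes the decomposition directly.
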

\begin{proof}
Since $\conv F$ is integrable, it is bounded, hence, $F$ is also bounded. Denote $M = \sup\limits_{t} \diam(F(t))$. For a tagged partition $T = \{\Delta_i, t_i \}_{i=1}^n$ of $[0,1]$ consider an arbitrary $b \in S(\conv(F),T)$. It can be written in the form $b = \sum\limits_{i=1}^n b_i $ with $b_i \in \conv F(t_i) |\Delta_i|$. Applying Lemma \ref{lsum} to the collection $\{b_i\}_{i=1}^n$ we obtain $\{a_i\}_{i=1}^n$ such that $a_i \in F(t_i) |\Delta_i|$ and
\begin{equation*}
\begin{split}
 \left\| \sum_{i=1}^{n} a_i - \sum_{i=1}^{n} b_i \right\| =
 \left\| \sum_{i=1}^{n} (a_i - b_i) \right\| \leq 
 C_1 \left( \sum_{i=1}^{n} (|\Delta_i| \diam(F(t_i)))^p \right)^{1/p} \leq \\
 C_1 \left( \sum_{i=1}^{n} (M |\Delta_i|)^p \right)^{1/p} =
 C_1 M \left( \sum_{i=1}^{n} |\Delta_i|^p \right)^{1/p} \leq C_1 \cdot M \cdot d(T)^\frac{p-1}{p},
\end{split}
\end{equation*}
where $d(T) = \max\limits_i |\Delta_i|$ and $C_1 = \frac{2C}{2^{1-\frac{1}{p}} - 1}$ (on the last step we used Lemma \ref{ldiam} with $d_i = |\Delta_i|$).
So, for every $b \in S(\conv(F),T)$ there is $ a = \sum\limits_{i=1}^n a_i \in S(F,T)$ such that $\|a-b\| \leq C_1 \cdot M \cdot d(T)^\frac{p-1}{p}$ .
Consequently, 
$$
\whausd(S(F,T), S(\conv(F),T)) = \sup\limits_{b\in S(\conv(F),T)} \rho(b, S(F,T)) \xrightarrow[d(T) \rightarrow 0]{} 0.
$$
On the other hand, $S(F,T) \subset S(\conv(F), T)$, so $ \whausd(S(\conv(F),T), S(F,T)) = 0$. 
This means that $\hausd(S(F,T), S(\conv F, T)) \xrightarrow[d(T) \rightarrow 0]{} 0, $ and
$$ 
\integral{F(t)} = \lim \limits_{d(T) \to 0} S(F,T) = \lim \limits_{d(T) \to 0} S(\conv F, T) = A
$$
as desired. 
\end{proof}

\subsection{Spaces without non-trivial infratype}
\begin{definition}
 Let $X$ and $Y$ be Banach spaces. The\textit{ Banach-Mazur distance} between $X$ and $Y$ is the number $d(X, Y) = \inf\{\|T\| \cdot \|T^{-1}\|\}$, where the infinum is taken over all isomorphisms $T: X \to Y$. If $X$ and $Y$ are not isomorphic, we put $d(X, Y) = +\infty$. 
\end{definition}
\begin{definition}
 \label{fin_repr}
 A Banach space $Y$ is said to be \emph{finitely representable} in a space $X$ ($Y \xrightarrow{f} X$ for short) if for any $\eps> 0$ and any finite-dimensional subspace $Z \subset Y$ there exists a finite-dimensional subspace $Z_1 \subset X$ such that $$d(Z,Z_1) < 1 + \eps.$$ 
\end{definition} 
\begin{definition}
$X$ is said to be \emph{B-convex} if the space $\ell_1$ is not finitely representable in $X$. 
\end{definition}
\begin{theorem}[\cite{Pisier}]
 \label{finrepr}
 A space $X$ is B-convex if and only if it has an infratype $p>1$.
\end{theorem}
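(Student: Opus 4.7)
The plan is to handle the two implications separately. The forward direction (infratype $p>1$ $\Rightarrow$ B-convex) is the easier one, while the reverse (B-convex $\Rightarrow$ some nontrivial infratype) is the deep part of the theorem due to Beck and Pisier.

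For the easy direction, I would argue by contrapositive. Suppose $\ell_1 \xrightarrow{f} X$. Then for every $n \in \N$ and every $\eps>0$, one finds vectors $x_1,\dots,x_n \in X$ with $\|x_i\| \le 1+\eps$ and $\|\sum_i \alpha_i x_i\| \ge (1+\eps)^{-1}\sum_i |\alpha_i|$ for all scalars. In particular, $\min_{\alpha_i=\pm 1}\|\sum_i \alpha_i x_i\| \ge (1+\eps)^{-1} n$. If $X$ had infratype $p>1$ with constant $C$, the same quantity would be bounded by $C(\sum_i \|x_i\|^p)^{1/p} \le C(1+\eps) n^{1/p}$, which contradicts the previous lower bound for large $n$ since $p>1$.

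For the hard direction, I would split it into two steps. The first step is a compactness/combinatorial lemma, essentially Beck's theorem: if $\ell_1$ is not finitely representable in $X$, there exist $N \in \N$ and $\delta>0$ such that for every $x_1,\dots,x_N \in B_X$, some signs $\alpha_i=\pm 1$ satisfy $\|\sum_{i=1}^N \alpha_i x_i\| \le (1-\delta)N$. This is established by contradiction: if it fails, one can, by a diagonal/ultrafilter argument on the finite family of sign vectors, extract finite-dimensional subspaces of $X$ arbitrarily close (in Banach--Mazur distance) to $\ell_1^N$ for arbitrarily large $N$, contradicting the B-convexity of $X$.

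The second step is an iterative block argument. Given an arbitrary collection $\{y_j\}_{j=1}^{N^k}$ in $X$ (reduced to the unit ball after scaling), I would group the indices into a balanced $N$-ary tree of depth $k$ and, working from the leaves up to the root, apply the Beck inequality of Step 1 at each node to choose signs yielding a factor $(1-\delta)$ gain per level. Combined over $k$ levels and reparametrized via the identity $N^{k/p} = N^k(1-\delta)^k$, this produces an infratype-$p$ estimate with exponent $p = \log N / \log\bigl(N/(1-\delta)\bigr) > 1$. A small extra truncation argument handles arbitrary finite collections whose size is not a pure power of $N$ and allows varying norms.

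The main obstacle is Step 1, the extraction of a quantitative Beck constant $(N,\delta)$ from the qualitative hypothesis that $\ell_1 \not\xrightarrow{f} X$: the compactness on the set of sign-patterns is delicate and I do not see an obvious way to shorten it. Once the Beck condition is available, the tree iteration in Step 2 is essentially a bookkeeping computation.
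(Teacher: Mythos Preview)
The paper does not prove this theorem at all: it is stated with a citation to Pisier and used as a black box. There is therefore no ``paper's own proof'' to compare your proposal against.

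That said, your outline is the standard route to the result and is essentially correct. The easy direction is fine as written. For the hard direction, your two-step plan (first extract a Beck-type inequality $\min_{\alpha_i=\pm1}\bigl\|\sum_{i=1}^N \alpha_i x_i\bigr\|\le (1-\delta)N$ for some fixed $N$ and $\delta$, then iterate it along an $N$-ary tree to manufacture an infratype exponent $p=\log N/\log\bigl(N(1-\delta)^{-1}\bigr)>1$) is exactly the classical argument. One point to be careful about in Step~1: the paper defines B-convexity as ``$\ell_1$ is not finitely representable in $X$'', which a priori speaks of \emph{all} scalar coefficients, whereas the Beck condition involves only signs $\pm1$; bridging this gap (showing that failure of the sign condition for every $N,\delta$ actually produces subspaces close to $\ell_1^N$) requires a short additional argument, not merely ``compactness on sign patterns''. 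You seem aware that Step~1 is the nontrivial part, and that is accurate.
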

%Also we need the following definition:
\begin{definition}
Let $\eps \in (0, 1)$.  A subspace $Z$ of a Banach space $X$ is said to be \emph{$\eps$-orthogonal} to a subspace $Y \subset X$ if for all $y \in Y$ and $z\in Z$
\begin{equation} \label{eqqq}
\|y+z\| \geq (1-\eps) \|y\|.
\end{equation}
\end{definition}

Remark, that  $Z$ and $Y$ in the above definition intersect only by the zero element of $X$. Indeed, if there is a non-zero $y \in Y \cap Z$, then $z := -y \in Z$ but $\|y+z\| = 0$, which contradicts \eqref{eqqq}.

\begin{lemma}[{\cite[Lemma 6.3.1]{Kadets}}] \label{perp_exists}
Let $X$ be a Banach space, $Y$ be a finite-dimensional subspace of $X$, and $Z$ be an infinite-dimensional subspace of $X$. Then for any $\eps \in (0, 1)$  there exists a subspace $G \subset Z$ of finite codimension in $Z$, such that $G$ is $\eps$-orthogonal to $Y$. 
\end{lemma}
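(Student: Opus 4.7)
The strategy is to reduce the $\eps$-orthogonality condition from a statement about all $y \in S_Y$ to a condition involving only finitely many vectors, exploiting the compactness of the unit sphere $S_Y$ (which holds because $\dim Y < \infty$). Once this reduction is made, each of those finitely many representatives can be handled by a single norming functional, and $G$ can be cut out as the intersection of the kernels of these functionals with $Z$.

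Concretely, I would first fix a parameter $\eta \in (0, \eps)$ (say $\eta = \eps/2$) and select a finite $\eta$-net $\{y_1, \ldots, y_N\} \subset S_Y$. By the Hahn--Banach theorem, for each $k$ pick a functional $f_k \in X^*$ with $\|f_k\| = 1$ and $f_k(y_k) = 1$. Then set
$$
G := Z \cap \bigcap_{k=1}^{N} \ker f_k.
$$
Since $\bigcap_{k=1}^{N} \ker f_k$ is the kernel of the finite-rank linear map $x \mapsto (f_1(x), \ldots, f_N(x))$ from $X$ to $\R^N$, it has codimension at most $N$ in $X$; hence $G$ has codimension at most $N$ in $Z$. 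The infinite-dimensionality of $Z$ ensures that $G$ is in fact nonzero (indeed, still infinite-dimensional), though this is not formally required by the statement.

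To verify that $G$ is $\eps$-orthogonal to $Y$, take an arbitrary nonzero $y \in Y$ and $g \in G$. By homogeneity we may assume $\|y\| = 1$. Choose $y_k$ in the net with $\|y - y_k\| < \eta$. Then, by the reverse triangle inequality, using $\|f_k\| = 1$ and $g \in \ker f_k$,
$$
\|y + g\| \geq \|y_k + g\| - \eta \geq f_k(y_k + g) - \eta = f_k(y_k) - \eta = 1 - \eta \geq 1 - \eps = (1-\eps)\|y\|.
$$

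I do not anticipate any serious obstacle here; the argument is a standard compactness-plus-Hahn--Banach construction. The only quantitative care required is to choose $\eta$ strictly smaller than $\eps$ so that the slack lost in approximating a general $y \in S_Y$ by a net point $y_k$ does not push the final estimate below $(1-\eps)\|y\|$.
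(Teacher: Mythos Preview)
Your argument is correct and is the standard proof of this fact: compactness of $S_Y$ reduces the problem to finitely many unit vectors, Hahn--Banach supplies norming functionals, and intersecting their kernels with $Z$ yields the required finite-codimensional subspace. The paper itself does not prove this lemma; it merely quotes it from \cite[Lemma 6.3.1]{Kadets}, and the proof given there is exactly the one you have written.
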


\begin{lemma} \label{perp}
Let $\eps \in (0, 1)$  and let a subspace $G$ of $X$ be $\eps$-orthogonal to a subspace $E$. Then $E$ is $\frac{1}{2-\eps}$-orthogonal to $G$. That is, for all $ e\in E, g\in G$
$$
\|e+g\| \geq \left(1-\frac{1}{2-\eps}\right)\|g\|.
$$
\end{lemma}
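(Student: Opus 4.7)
The plan is to reduce the inequality to a clean form, then combine the hypothesis with the triangle inequality. First I would observe that
$$1 - \frac{1}{2-\eps} = \frac{1-\eps}{2-\eps},$$
so the target inequality becomes $\|e+g\| \geq \frac{1-\eps}{2-\eps}\|g\|$ for all $e \in E$, $g \in G$.

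Next I would apply the reverse triangle inequality to get $\|e\| \geq \|g\| - \|e+g\|$. Combined with the hypothesis $\|e+g\| \geq (1-\eps)\|e\|$, this should be the whole game. It is slightly cleaner to split into two cases. If $\|e+g\| \geq \|g\|$, the conclusion is immediate since $\frac{1-\eps}{2-\eps}\le 1$. Otherwise $\|g\| - \|e+g\| > 0$, and bounding $\|e\|$ from below by this positive quantity in the hypothesis gives
$$\|e+g\| \geq (1-\eps)\|e\| \geq (1-\eps)\bigl(\|g\| - \|e+g\|\bigr).$$
Rearranging yields $(2-\eps)\|e+g\| \geq (1-\eps)\|g\|$, which is exactly the required estimate.

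There is essentially no obstacle here: the argument is just a triangle-inequality bookkeeping combined with the identity $1 - \frac{1}{2-\eps} = \frac{1-\eps}{2-\eps}$. The only minor point to be careful about is treating the case $\|g\|=0$ (where both sides are zero) and the case $\|e+g\| \geq \|g\|$ separately, so that we do not divide by a potentially negative or zero quantity when isolating $\|e+g\|$.
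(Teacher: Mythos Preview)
Your proof is correct and follows essentially the same approach as the paper: combine the hypothesis $\|e+g\|\geq(1-\eps)\|e\|$ with the triangle inequality $\|e+g\|\geq\|g\|-\|e\|$ to obtain $(2-\eps)\|e+g\|\geq(1-\eps)\|g\|$. The paper does this in one line by multiplying the second inequality by $(1-\eps)$ and adding, so your case split (while harmless) is not needed---the chain $\|e+g\|\geq(1-\eps)\|e\|\geq(1-\eps)(\|g\|-\|e+g\|)$ is valid regardless of the sign of $\|g\|-\|e+g\|$, since $(1-\eps)>0$.
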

%\begin{proof}
\noindent\emph{Proof}.
Since $G$ is $\eps$-orthogonal to $E$, $\|e+g\| \geq (1 - \eps) \|e\|$. On the other hand, $\|e+g\| \geq \|g\|-\|e\|$. Multiply the second inequality by $(1-\eps)$ and add it to the first:
$$
(2-\eps) \|e+g\| \geq (1-\eps) \|g\|.
$$
Then 
$$
\|e+g\| \geq \frac{1-\eps}{2-\eps} \cdot \|g\| = \left(1- \frac{1}{2-\eps}\right)\|g\|. \eqno{\Box}
$$
%\end{proof}

\begin{lemma} \label{lem-chast-teor}
Let $X$ be a Banach space without a non-trivial infratype. Then for every  $\eps \in (0, 1)$ there are sequences $\{E_k\}_{k\in \mathbb{N}}$ and $\{G_k\}_{k\in \N\cup\{0\}}$ of subspaces of $X$ such that for all $k\in \mathbb{N}$
 
\begin{enumerate}[{$($i$ )$}] 
\item $\dim E_k = 2^{k-1}$, $\codim G_k < \infty$;
\item $G_0 \supset G_1 \supset G_2 \supset \ldots$;
\item $E_k \subset G_{k-1}$; 
\item $d(E_k, \ell_1^{(2^{k-1})}) < 2$;
\item $G_k$ is $\eps$-orthogonal to $E_1+E_2+\ldots + E_k$.
\end{enumerate}
\end{lemma}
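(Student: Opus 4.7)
The plan is an induction on $k$ with base case $G_0 := X$. Suppose $G_0 \supset G_1 \supset \ldots \supset G_{k-1}$ and $E_1, \ldots, E_{k-1}$ have been built so that (i)--(v) hold for indices below $k$. Two things remain at stage $k$: produce $E_k \subset G_{k-1}$ with $\dim E_k = 2^{k-1}$ and $d(E_k, \ell_1^{(2^{k-1})}) < 2$, and then produce $G_k \subset G_{k-1}$ of finite codimension in $X$ that is $\eps$-orthogonal to $Y_k := E_1 + E_2 + \ldots + E_k$. The second task is routine: $X$ is infinite-dimensional (finite-dimensional spaces have infratype $2$, contradicting the standing hypothesis), so $G_{k-1}$ is infinite-dimensional too, and Lemma \ref{perp_exists} applied with $Y = Y_k$ and $Z = G_{k-1}$ supplies such a $G_k$.

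The substantive step is producing $E_k$. By hypothesis and Theorem \ref{finrepr}, $\ell_1$ is finitely representable in $X$, but this must be transferred to the finite-codimensional subspace $G_{k-1}$. I would handle this transfer by a pigeonhole argument. Write $G_{k-1} = \bigcap_{j=1}^m \ker y_j^*$ with $y_j^* \in X^*$ of norm $1$, and set $\Phi(x) := (y_1^*(x), \ldots, y_m^*(x)) \in \R^m$. The open mapping theorem applied to the induced surjection $X/G_{k-1} \to \R^m$ yields a constant $C > 0$ with $\dist(x, G_{k-1}) \leq C \|\Phi(x)\|_\infty$ for every $x \in X$. Fix small $\eta, \delta > 0$ to be chosen at the end. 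Finite representability of $\ell_1$ produces, for $N$ as large as desired, unit vectors $x_1, \ldots, x_N \in X$ satisfying $\|\sum a_i x_i\| \geq (1 - \eta) \sum |a_i|$. Since each $\Phi(x_i)$ lies in the bounded set $[-1, 1]^m$, pigeonhole provides (for $N$ sufficiently large) distinct indices $i_1, \ldots, i_{2^k}$ with $\|\Phi(x_{i_l}) - \Phi(x_{i_{l'}})\|_\infty < \delta$ for all $l, l'$.

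Put $z_l := x_{i_{2l}} - x_{i_{2l-1}}$ for $l = 1, \ldots, 2^{k-1}$. Then $\|\Phi(z_l)\|_\infty < \delta$, so one may choose $g_l \in G_{k-1}$ with $\|g_l - z_l\| < C\delta$. Because the $2^k$ indices $i_l$ are distinct, the near-$\ell_1^{(N)}$ bound gives $2(1-\eta) \sum_l |c_l| \leq \|\sum_l c_l z_l\| \leq 2 \sum_l |c_l|$, and a direct triangle-inequality comparison yields
\begin{equation*}
(2 - 2\eta - C\delta) \sum_l |c_l| \,\leq\, \Bigl\| \sum_l c_l g_l \Bigr\| \,\leq\, (2 + C\delta) \sum_l |c_l|.
\end{equation*}
Choosing $\eta$ and $\delta$ small enough makes the ratio $(2 + C\delta)/(2 - 2\eta - C\delta)$ strictly less than $2$, so $E_k := \Lin\{g_1, \ldots, g_{2^{k-1}}\}$ lies in $G_{k-1}$, has dimension $2^{k-1}$, and satisfies (iv). The main obstacle is exactly this upgrade of finite representability of $\ell_1$ from the whole space $X$ to a finite-codimensional subspace; once that is in hand, all the remaining conditions in (i)--(v) follow immediately from the construction.
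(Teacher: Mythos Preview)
Your proof is correct, and its inductive architecture---set $G_0 = X$, at each stage build $E_k \subset G_{k-1}$, then invoke Lemma~\ref{perp_exists} to get $G_k$---matches the paper's exactly. The only point of divergence is in how you obtain $E_k$. The paper simply quotes the known fact (\cite[Lemma 7.3.1]{Kadets}) that a finite-codimensional subspace of a non-B-convex space is again non-B-convex, so $\ell_1 \xrightarrow{f} G_{k-1}$ directly, and $E_k$ falls out of Definition~\ref{fin_repr}. You instead supply a self-contained pigeonhole argument: take a large near-isometric $\ell_1^{(N)}$ block in $X$, use finiteness of $\codim G_{k-1}$ to find $2^k$ basis vectors whose images under the quotient map cluster, and take differences to land (approximately) in $G_{k-1}$. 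This is essentially a hands-on proof of the cited lemma in the special case at hand. What you gain is independence from the external reference and a slightly sharper conclusion (your ratio $(2+C\delta)/(2-2\eta-C\delta)$ can be pushed arbitrarily close to~$1$, not just below~$2$); what the paper gains is brevity. One small remark: your representation $G_{k-1} = \bigcap_j \ker y_j^*$ presupposes that $G_{k-1}$ is closed, which is not stated in Lemma~\ref{perp_exists} but holds for the standard construction behind it (intersection of kernels of norming functionals), so this is harmless.
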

\begin{proof}
Since $X$ does not have an infratype, it is not B-convex. Hence \cite[Lemma 7.3.1]{Kadets}, none of finite-codimensional subspaces of $X$ is B-convex. 
 We are going to construct this sequences by induction.  Set $G_0 = X$.

\textbf{The base of induction: $ k = 1$}.  Choose a one-dimensional subspace $E_1 \subset X$. Then automatically $d(E_1, \ell_1^{(1)}) = 1  < 2$. By Lemma \ref{perp_exists} there exists a subspace $G_1 \subset X$ of finite codimension such that $G_1$ is $\eps$-orthogonal to $E_1$. Thus, the conditions above hold for $k=1$. 
 
\textbf{The inductive step}. 
 Suppose we have already constructed $E_i$ and $G_i$ for $i<k$ such that the conditions above hold.  Since $G_{k-1}$ has finite codimension in $X$, it is not $B$-convex, so $\ell_1 \xrightarrow{f} G_{k-1}$. Therefore there exists $E_k \subset G_{k-1}$ with $\dim E_k = 2^{k-1}$ and $d(E_k, \ell_1^{(2^{k-1})}) < 2$.
Then pick a subspace $G_k$ of finite codimension in $G_{k-1}$ such that it is $\eps$-orthogonal to $E_1+E_2+\ldots + E_k$ (it is possible because of Lemma \ref{perp_exists}). 
Also, since $\codim_X G_{k-1}<\infty$, we have $\codim_X G_k < \infty$. It can be easily seen that the constructed elements of the sequences satisfy the conditions above.
\end{proof}

\begin{theorem} \label{thm-without-infratype}
 Let $X$ be a Banach space without a non-trivial infratype. Then there exists a multifunction $F: [0, 1] \to \bb X$ such that $\conv (F)$ is integrable, but $F$ is not.
\end{theorem}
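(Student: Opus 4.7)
The plan is to construct $F$ as a piecewise-constant multifunction on $[0,1]$ using the subspaces from Lemma \ref{lem-chast-teor}. Applying that lemma with $\eps=1/2$ yields $\{E_k\}$ (with $\dim E_k=n_k=2^{k-1}$) and nested $\{G_k\}$ satisfying the stated $\eps$-orthogonality. From $d(E_k,\ell_1^{(n_k)})<2$ and a rescaling of an isomorphism, one can fix vectors $e^{(k)}_1,\ldots,e^{(k)}_{n_k}\in E_k$ with $\|e^{(k)}_j\|\le 2$ and $\|\sum_j\lambda_j e^{(k)}_j\|_{E_k}\ge \sum_j|\lambda_j|$ for all scalars. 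I would then partition $[0,1]$ into disjoint intervals $I_k$ of lengths $\ell_k$ summing to $1$ (to be specified) and set $F(t)=\{e^{(k)}_1,\ldots,e^{(k)}_{n_k}\}$ for $t\in I_k$; this $F$ is bounded.

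Showing $\conv F$ is integrable is the easy step: on each $I_k$ the value $\conv F(t)=C_k:=\conv\{e^{(k)}_j\}$ is constant and convex, so for intervals $\Delta_i\subset I_k$ with $t_i\in I_k$ one has $\sum|\Delta_i|\,C_k=(\sum|\Delta_i|)\,C_k=\ell_k C_k$. After controlling boundary errors (of order $d(T)$) and tail errors via $\sum \ell_k<\infty$, one reads off $\int_0^1\conv F(t)\,\dt=A$, where $A:=\overline{\sum_k \ell_k C_k}$ is a well-defined bounded subset of $X$.

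For the non-integrability of $F$, by Corollary \ref{cor25u} it suffices to exhibit tagged partitions $T_n$ with $d(T_n)\to 0$ along which $\hausd(S(F,T_n),A)$ stays bounded away from $0$. The natural witness is the ``center'' point $a=\sum_k \frac{\ell_k}{n_k}\sum_j e^{(k)}_j\in A$. Any Riemann sum $s\in S(F,T)$ splits as $\sum_k s_k$ with $s_k=\sum_j \mu^{(k)}_j e^{(k)}_j\in E_k$, the $\mu^{(k)}_j$ being subset-sums of those $|\Delta_i|$ with $t_i\in I_k$ and $j_i=j$. When fewer than $n_k/2$ tags fall in $I_k$, at least $n_k/2$ of the $\mu^{(k)}_j$ vanish, so the $\ell_1$-type lower bound on $\|\cdot\|_{E_k}$ forces $\|a_k-s_k\|_{E_k}\ge \ell_k/2$. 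Combining the $\eps$-orthogonality of $G_k$ to $E_1+\cdots+E_k$ with Lemma \ref{perp}, one extracts the $E_k$-component and obtains $\|a-s\|\ge c_0\,\ell_k/2$, for a constant $c_0>0$ depending only on $\eps$.

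The main obstacle is to upgrade this per-scale estimate to a bound on $\hausd(S(F,T_n),A)$ that is \emph{uniform} in $n$. Because $\sum \ell_k<\infty$ the $\ell_k$'s tend to $0$, and as $d(T_n)\to 0$ the ``bad'' scale $k=k(T_n)$ is forced to tend to infinity, so naively the lower bound $c_0\,\ell_{k(T_n)}/2$ vanishes. To close the gap one must either choose the witness point adaptively, taking $a_n\in A$ whose mass concentrates on the bad scales of $T_n$, or aggregate the per-scale estimates by iterating the orthogonality from Lemma \ref{lem-chast-teor} into an $\ell_1$-like sum over the $E_k$'s, together with a careful choice of $\ell_k$, $n_k$ and the partitions $T_n$ so that the total bad-scale mass stays bounded below by some $\eps_0>0$. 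Squeezing such an aggregated lower bound out of the pairwise $\eps$-orthogonality provided by Lemma \ref{lem-chast-teor} is the delicate step I expect the actual proof to turn on.
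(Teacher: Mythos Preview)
Your proposal assembles the right ingredients (Lemma \ref{lem-chast-teor}, the $\ell_1$-type lower bound inside each $E_k$, and the extraction via $\eps$-orthogonality together with Lemma \ref{perp}), and you correctly diagnose the obstacle. The trouble is that neither of your suggested fixes closes it in the piecewise-constant setup. In your construction $A=\overline{\sum_k \ell_k C_k}$ is a Minkowski sum with \emph{fixed} weights, so every $a\in A$ has $E_k$-component of size at most $C\ell_k$; an adaptive witness therefore cannot concentrate mass on a chosen scale, and the per-scale bound is still $O(\ell_k)$. As for aggregation, the $\eps$-orthogonality of Lemma \ref{lem-chast-teor} only yields an $\ell_\infty$-type extraction $\|x\|\ge c_0\max_k\|x_k\|$ (iterate the lemma and Lemma \ref{perp} once to isolate a single $x_k$); it does not give an $\ell_1$-type sum over the $E_k$'s, and without an actual $\ell_1$-decomposition of $\sum_k E_k$ you cannot add the per-scale deficits. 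Finally, once $d(T_n)\to 0$ each interval $I_k$ receives at least roughly $\ell_k/d(T_n)$ tags, so a scale can be ``bad'' only when $\ell_k<d(T_n)\,n_k$, which again forces the relevant $\ell_k$ to be small. The piecewise-constant construction seems genuinely stuck.

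The paper sidesteps all of this by taking $F$ to be \emph{constant} with an \emph{infinite} value: $F(t)=\{e_j\}_{j\in\N}$ for every $t$, where the $e_j$ enumerate normalized bases of all the $E_k$ at once. Then $\conv F$ is a constant convex-valued function, so its integrability is immediate from Proposition \ref{prop-const-int1}, with $A=\cconv\{e_j\}$. The decisive gain is that $A$ now contains, for every $n$, a point $y_n\in E_{n+1}\cap A$ of norm comparable to $1$ (the near-barycenter of the $E_{n+1}$-basis vectors). Choosing $T_n$ to be the uniform partition into $2^{n-1}-1$ parts, any $s\in S(F,T_n)$ is a convex combination of at most $2^{n-1}-1$ of the $e_j$'s, so at least half of the $E_{n+1}$-coordinates of $y_n$ are untouched by $s$; the $\ell_1$-structure of $E_{n+1}$ together with one application of the $\eps$-orthogonality and Lemma \ref{perp} then gives $\|y_n-s\|\ge \tfrac{1}{24}$ uniformly in $n$. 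Thus your adaptive-witness idea is exactly the mechanism the paper uses, but it only becomes effective after replacing the piecewise-constant $F$ by a single constant $F$ whose value already contains all scales.
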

\begin{proof}
Let us apply the previous lemma with $\eps = \frac{1}{2}$ in order to get the corresponding subspaces $\{E_k\}_{k\in \mathbb{N}}$ and $\{G_k\}_{k\in \N\cup\{0\}}$ that satisfy the properties (i)-(v).
Remark that, by $\eps$-orthogonality, $E_k \cap E_j = \{0\}$ for $k \neq j$.

Since $d(\ell_1^{(2^{k-1})}, E_k) < 2$, there exists a linear operator $T_k: \ell_1^{(2^{k-1})} \to E_k$ such that $\|T_k\| \cdot \|T_k^{-1}\| <2$. Replacing $T_k$ with $\lambda \cdot T_k$, we may assume that $\|T_k\| = 1$. Then $\|T_k^{-1}\| < 2$.\newline
 Let $\{\hat{e}_j\}$ be the standard basis of $\ell_1$, then $\{\hat{e}_j\}_{j=1}^{2^{k-1}}$ is a basis of $\ell_1^{(2^{k-1})}$. We can write $T_k\hat{e}_j$ as  $T_k\hat{e}_j = \tau_{2^{k-1}-1 + j} e_{2^{k-1} - 1+j}$, where $\|e_{2^{k-1}-1 + j}\| = 1$. Since $\|T_k\| = 1$ and $\|e_j\| = 1$, $|\tau_{2^{n-1} - 1+j}|\leq 1$. $T_k$ is invertible, so $\{e_j\}_{j=2^{k-1}}^{2^k-1}$ forms a normalized basis of $E_k$.

Now  define the promised function $F: [0, 1] \to \bc X$ to be $F(t) = \{e_j\}_{j\in \mathbb{N}}$ for all $t \in [0, 1]$. Denote $A = \cconv(\{e_j\}_{j\in \mathbb{N}})$. Since $(\conv F)(t) = \conv (\{e_j\}_{j\in \mathbb{N}})$ is convex, Proposition \ref{prop-const-int1} says that $\conv F$ is Riemann integrable with $\integral{(\conv F)(t)} = A$. Therefore, in order to demonstrate that $F$ is not integrable it is sufficient to show that the integral sums of $F$ do not converge to $A$ (Corollary \ref{cor25u}). Consider the tagged partition $T_n$ of the segment $[0,1]$ into $2^{n-1} - 1$ equal parts and denote $S_n$ the corresponding integral sum:
\begin{equation} \label{eqqS_N}
 S_n = S(F, T_n) = \sum \limits_{l = 1}^{2^{n-1}-1}\frac{1}{2^{n-1} - 1} \{e_j\}_{j\in \mathbb{N}} = \left\{\sum \limits_{l = 1}^{2^{n-1}-1}\frac{1}{2^{n-1} - 1} e_{j_l}: j_l \in \N\right\}.
\end{equation}
For each $n\in \mathbb{N}$ choose $y_n = \frac{1}{2^{n}-1}\sum \limits_{j=2^n}^{2^{n+1}-1} e_j \in E_{n+1} \in A$. Then 
\begin{equation} \label{eqqS_N2}
\hausd (S_n, A) \geq \rho(S_n, y_n) = \inf \limits_{s \in S_n} \|y_n-s\|.
\end{equation}
Take an arbitrary $s \in S_n$. According to \eqref{eqqS_N} $s$ is of the form
$s = \sum \limits_{j=1}^\infty t_j e_j$, where $\sum \limits_{j=1}^\infty t_j = 1$ and at most $2^{n-1} - 1$ of $t_j$ are non-zero.
Denote $x = y_n - s = \sum \limits_{j=1}^\infty c_j e_j$. Then  $s$ has at most $2^{n-1}-1$ non-zero coordinates and $y_n$ has exactly $2^{n}-1$  equal to $\frac{1}{2^{n}-1}$, so at least $2^{n-1}$ coordinates of $x$ with numbers between $2^n$-th and $2^{n+1}-1$ are equal to $\frac{1}{2^{n}-1}$.
Denoting $x_k = \sum \limits_{j=2^{k-1}}^{2^k - 1} c_j e_j \in E_k$ for $k\leq n$ and $x_{\infty} = \sum \limits_{j=2^{n+1}}^\infty c_j e_j \in E$ we get $x = \sum \limits_{k=1}^{n+1} x_k + x_{\infty}$. Note that $x_k \in G_{k-1}, x_\infty \in G_{n+1}$. Now we are ready to bound $\|x\|$ from below.
\begin{equation} \label{eqqS_N3}
 \|x\| = \left\|\sum \limits_{k=1}^{n+1} x_k + x_\infty \right\| \geq (1-\eps)  \left\|\sum \limits_{k=1}^{n+1} x_k \right\| = (1-\eps) \left\|\sum \limits_{k=1}^{n} x_k + x_{n+1}\right\|.
\end{equation}
(We used the $\eps$-orthogonality of  $G_{n+1}$ to  $E_1+E_2+\ldots E_{n+1}$ and the fact that $x_\infty \in G_{n+1}$ and $\sum \limits_{k=1}^{n+1} x_k \in E_1+E_2+\ldots + E_{n+1}$).
Remark that $\sum \limits_{k=1}^{n} x_k \in E_1+E_2+ \ldots + E_n$, and $x_{n+1} \in G_{n}$, which is $\eps$-orthogonal to $E_1+E_2+\ldots + E_{n}$, so we may apply Lemma \ref{perp}:
\begin{equation*}
\left\|\sum \limits_{k=1}^{n} x_k + x_{n+1}\right\| \ge   \frac{1 - \eps}{2-\eps} \|x_{n+1}\| = \frac{1}{3}\|x_{n+1}\|,
\end{equation*}
which together with \eqref{eqqS_N3} gives the estimate
\begin{equation} \label{eq5}
 \|x\| \ge \frac{1}{6}\|x_{n+1}\|.
\end{equation}
It remains to bound from below the norm of $x_{n+1} = \sum \limits_{j=2^{n}}^{2^{n+1}-1} c_j e_j\in E_{n+1}$.  
 \begin{align*}
 \|x_{n+1}\| &\geq \frac{1}{\|T_{n+1}^{-1}\|} \cdot \|T_{n+1}^{-1} x_{n+1}\| > 
 \frac{1}{2} \cdot \left\|T_{n+1}^{-1} \left(\sum \limits_{j=2^{n}}^{2^{n +1}-1} c_j e_j\right)\right\| \\ &= \frac{1}{2} \left\|\sum \limits_{j=2^{n}}^{2^{n +1}-1} c_j \cdot T_k^{-1} e_j \right\| = \frac{1}{2} \left\|\sum \limits_{j=2^{n}}^{2^{n +1}-1} c_j \cdot \frac{\hat{e}_{j-2^{n-1}+1}}{\tau_j} \right\| = \frac{1}{2}  \sum \limits_{j=2^{n}}^{2^{n +1}-1} |c_j| \cdot \frac{1}{|\tau_j|}.
 \end{align*}
Plugging in the result into $\eqref{eq5}$ and taking into account that $|\tau_j| \leq 1$, we obtain:
 $$\|x\| \geq \frac{1}{6} \cdot \|x_{n+1}\| \geq \frac{1}{6} \cdot \frac{1}{2} \cdot \sum \limits_{k=2^{n}}^{2^{n+1}-1} |c_k| = \frac{1}{12} \cdot \sum\limits_{k=2^{n}}^{2^{n+1}-1} |c_k| $$
 We know that at least $2^{n-1}$ of $c_j$ are equal to $\frac{1}{2^{n}-1}$. Replace the rest with $0$:
 $$ \|x\| \geq \frac{1}{12} \cdot 2^{n-1} \cdot \frac{1}{2^n-1} \geq \frac{1}{12} \cdot \frac{2^{n-1}}{2^n} = \frac{1}{24}$$
Thus $\hausd(S_n,A)$ is bounded below by $\frac{1}{24}$ and therefore $S_n$ do not converge to $A$.
\end{proof}

\section{The compact case} \label{sect4}
\begin{definition}
 A Banach space $X$ is said to have the \textit{approximation property}, if for every compact subset $A \subset X$ and every $\eps > 0$ there exists a finite-rank operator $P: X \to X$ such that $\|Px - x\| < \eps$ for every $x\in A$.
\end{definition}
We refer to \cite[Chapter 4]{Ryan} for basic examples of spaces with  the approximation property.

\begin{prop}
Let $X$ be a Banach space that satisfies the approximation property.
Let $F: [0,1]\to \bb X$ be a multifunction such that $\conv F$  is Riemann integrable with compact integral (that is, $A = \int\limits_0^1 \conv F(t)\dt$  is compact). Then $F$ is also Riemann integrable. 
\end{prop}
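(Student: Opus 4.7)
The plan is to leverage the approximation property to transfer the problem into a finite-dimensional subspace, apply Polovinkin's classical finite-dimensional theorem to $P\circ F$ there (the converse of Theorem~\ref{conv_integr} known in finite dimensions from \cite{Polovin1983}), and lift the finite-dimensional conclusion back to $X$ using compactness of $A$.

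First, since $F(t)\subset \conv F(t)$ gives $S(F,T)\subset S(\conv F, T)$ for every tagged partition $T$, the integrability of $\conv F$ with integral $A$ immediately yields $\whausd(S(F,T),A)\to 0$ as $d(T)\to 0$. The remaining task is therefore to show the opposite one-sided bound $\whausd(A,S(F,T))\to 0$, i.e.\ that every $a\in A$ is approximable by elements of $S(F,T)$ for small $d(T)$.

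Fix $\eta>0$ and a small $\eps>0$ (to be chosen). Using AP on the compact set $A$, pick a finite-rank operator $P:X\to X$ with $\|Px-x\|<\eps$ on $A$, and set $Y:=P(X)$, a finite-dimensional subspace of $X$. Proposition~\ref{prop21++} applied to the integrable multifunction $\conv F$ shows that $P\circ\conv F = \conv(P\circ F)$ is Riemann integrable in $Y$ with integral $\overline{P(A)}=P(A)$ (compact, as a continuous image of a compact set). Polovinkin's finite-dimensional theorem now gives that $P\circ F$ is itself Riemann integrable in $Y$ with integral $P(A)$, so for $d(T)$ sufficiently small, $\hausd(P(S(F,T)),P(A))<\eps$. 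Given any $a\in A$, this produces some $s\in S(F,T)$ with $\|Ps-Pa\|<\eps$; the bound $\whausd(S(F,T),A)<\eps$ produces some $a'\in A$ with $\|s-a'\|<\eps$. Chaining triangle inequalities through $P$ yields
\begin{equation*}
\|s-a\|\;\leq\;\|s-a'\|+\|a'-Pa'\|+\|Pa'-Ps\|+\|Ps-Pa\|+\|Pa-a\|\;<\;(\|P\|+4)\eps,
\end{equation*}
where only the term $\|Pa'-Ps\|\leq \|P\|\cdot\|a'-s\|<\|P\|\eps$ involves the operator norm of $P$.

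The main obstacle is controlling $\|P\|$: the approximation property alone does not give an a priori bound on the operator norm of $P$ as $\eps\to 0$, whereas the estimate above requires $\|P\|\eps$ to be small. I would handle this by appealing to a bounded version of the approximation property, available in a suitable separable subspace of $X$ containing $A$ (for instance via a Schauder basis or an FDD that the space of interest admits), which permits choosing $P$ with $\|P\|$ uniformly bounded by a constant $\lambda$ independent of $\eps$; then $\eps:=\eta/(\lambda+5)$ forces $(\|P\|+4)\eps<\eta$. Alternatively, one can perform a two-scale bootstrap: first select $P$ from AP with a mild parameter $\eps_0$ and record $K:=\|P\|$, then re-apply AP with parameter $\eta/(K+5)$ and iterate. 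Either way, once $(\|P\|+4)\eps<\eta$ is secured, $\whausd(A,S(F,T))<\eta$ holds for all sufficiently small $d(T)$, and combined with the first step this gives $\hausd(S(F,T),A)\to 0$, proving that $F$ is Riemann integrable with integral $A$.
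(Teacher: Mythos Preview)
Your reduction to a finite-dimensional quotient via $P$ is exactly the right idea, and the chain of inequalities is correct as far as it goes, but the proposed remedies for the $\|P\|$-dependence do not work. The approximation property does \emph{not} imply the bounded approximation property: there exist Banach spaces (Figiel--Johnson) with AP but without BAP, and a separable subspace containing $A$ need not have a Schauder basis or FDD (Enflo). Your ``two-scale bootstrap'' is circular: re-applying AP with parameter $\eta/(K+5)$ produces a \emph{new} operator $P'$ whose norm may exceed $K$, so the inequality $(\|P'\|+4)\cdot \eta/(K+5)<\eta$ is not guaranteed. Thus the argument, as written, does not close.

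The paper's proof avoids bounding $\|P\|$ altogether by working with $Q:=I-P$. The key observation is that $Q(A)\subset \eps B_X$ (since $\|Qx\|<\eps$ on $A$), and that $Q\circ \conv F$ is integrable with integral $\overline{Q(A)}$ by Proposition~\ref{prop21++}. Hence for $d(T)$ small, every point of $\conv(Q(S))=S(Q\circ\conv F,T)$ lies within $\eps$ of $Q(A)$, so $\|Qs\|<2\eps$ for every $s\in S(F,T)$. Now the decomposition $\|s-a\|\le \|Ps-Pa\|+\|Qs-Qa\|$ gives a bound of $\eps+3\eps=4\eps$ with no $\|P\|$ appearing. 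In your chain, the offending term $\|Pa'-Ps\|$ could likewise be rewritten as $\|Pa'-Ps\|\le \|a'-Pa'\|+\|a'-s\|+\|s-Ps\|$, and the last summand $\|Qs\|$ is precisely what the paper controls via the $Q$-trick; this is the missing ingredient.
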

\begin{proof}
Choose an $\eps>0$. 
Since $A$ is compact, there exists a finite rank operator $P$ such that $\|x - P x\| < \eps$ for all $x\in A$. Since $A = \integral{\conv F(t)}$, there is $\delta_1 > 0$ such that for all tagged partitions $T$ of $[0, 1]$, with $d(T) < \delta_1$ the inequality $\hausd(S(\conv F, T), A) < \eps$ holds true. 
Combining Theorem \ref{conv_integr}, Proposition \ref{prop21++} and convexity of $A$ we obtain that multifunction $\conv (P \circ F)$ is integrable and 
$$
\int\limits_0^1 \conv (P \circ F)(t) \dt =  \cconv(\overline{P(A)}) = \overline{P(A)}.
$$
 Now, using Theorem \ref{thm-without-infratype} in the finite-dimensional space $P(X)$, we obtain that $P \circ F$ is also integrable and $\int\limits_0^1 P(F(t)) \dt = \overline{P(A)}$. 
 Therefore there exists $\delta_2 > 0$ such that for all tagged partitions $T$ of $[0, 1]$, with $d(T) < \delta_2$ the inequality $\hausd(S(P \circ F, T), P(A)) < \eps$ holds true.
 
Denote $Q = 1 - P$. By the same reason as above, one can find a constant $\delta_3 > 0$ such that $\hausd(S(Q(\conv F), T), Q(A)) < \eps$ for every tagged partition $T$ of $[0, 1]$, with $d(T) < \delta_3$.

Let $\delta = \min\{\delta_1, \delta_2, \delta_3\}$. 
Fix tagged partition $T$ with $d(T)<\delta$ and denote $S = S(F,T)$ and $\widetilde{S} = S(\conv F, T) = \conv(S)$. By the choice of $\delta$, we have that
$$\hausd(\conv (S), A) < \eps, \hausd(P (S), P (A)) < \eps, \hausd(Q (\widetilde{S}), Q(A)) = \hausd(\conv (Q (S)),Q(A)) < \eps
$$
at once. Also we know that $\|x -P x\| = \|Q x\| < \eps$  for all  $x\in A$, so $Q(A) \subset  \eps B_X$.
Since $\hausd(\conv (Q(S)), Q(A)) < \eps$, for every $x \in \conv (Q (S))$ (and in particular for every $x \in Q(S)$) there exists $v \in Q(A)$ with $\|x - v\| < \eps$. Therefore $\|x\| \leq \|x - v\| + \|v\| < 2\eps$. Consequently, for $x \in Q(S)$, $y \in Q(A)$
$$
 \|x - y\| \leq \|x\| + \|y\| < 3\eps.
$$
Now let us demonstrate that $\hausd (S, A) < 4\eps$.

Indeed, for every $s\in S$  we can approximate up to $\eps$ the element $P s \in P(S)$ by an element of $P(A)$. In other words, there is $a_0 \in A$ such that $\|P s - P a_0\| < \eps$. Then 
$$
\|s - a_0\| = \|(P s + Q s) - (P a_0 + Q a_0)\| \leq \|P s - P a_0\| + \|Q s - Q a_0\| < \eps+ 3 \eps< 4\eps.
$$
The same way, approximating for arbitrary $a \in A$ the corresponding $Pa \in P(A)$ by an element of $P(S)$ we obtain $s_0 \in S$ with $\|P s_0 - P a\| < \eps$. Then 
$$
\|s_0 - a\| = \|(P s_0 + Q s_0) - (Pa + Q a)\| \leq \|P s_0 - Pa\| + \|Q s_0 - Q a\| < \eps+ 3 \eps< 4\eps.
$$ 
Summarizing, for every $\eps> 0$ we have found a $\delta>0$ such that $\hausd(S(F,T), A) < 4\eps$ for all tagged partitions $T$ with $d(T)<\delta$. This means that $\int\limits_0^1 F(t) \dt = A$.
\end{proof}

\begin{theorem}
Let $X$ be a Banach space. Let $F: [0,1]\to \bb X$ be a multifunction such that its convex hull is Riemann integrable and $A = \int\limits_0^1 \conv F(t)\dt$ is compact. Then the multifunction $F$ is also Riemann integrable (and, thanks to Corollary \ref{cor25u},  $\int\limits_0^1 F(t)\dt = A$). 
\end{theorem}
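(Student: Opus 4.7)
The plan is to reduce the theorem to the previous proposition by isometrically embedding $X$ into an auxiliary Banach space that does possess the approximation property. A convenient choice is $Y := C(K)$, where $K := (B_{X^*}, w^*)$ is compact Hausdorff by the Banach--Alaoglu theorem, together with the canonical isometric linear embedding $J\colon X \to Y$ given by $(Jx)(x^*) := x^*(x)$. The space $C(K)$ is well-known to have the (metric) approximation property (see, e.g., \cite[Chapter 4]{Ryan}).

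The first step will be a transfer principle: since $J$ is linear and isometric, it commutes with Minkowski sums and convex hulls, and for any $U, V \in \bb X$ the Hausdorff distance $\hausd(U,V)$ computed in $X$ coincides with the Hausdorff distance $\hausd(JU, JV)$ computed in $Y$. Consequently, for every multifunction $H\colon [0,1]\to \bb X$ and every $B \in \bc X$, the function $H$ is Riemann integrable in $X$ with integral $B$ if and only if $J\circ H$ is Riemann integrable in $Y$ with integral $JB$. Moreover $J$ carries (pre)compact sets to (pre)compact sets.

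Applying this transfer to $\conv F$, one finds that $J\circ \conv F = \conv(J\circ F)$ is Riemann integrable in $Y$ with integral $JA$, and $JA$ is compact in $Y$ since $A$ is compact in $X$. The previous proposition, applied in $Y$, then yields the Riemann integrability of $J\circ F$ in $Y$. Transferring back through $J$, $F$ is Riemann integrable in $X$, and Corollary \ref{cor25u} forces the integral to equal $A$.

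The main obstacle is conceptual rather than computational: the key idea is that the approximation property used in the previous proposition is not genuinely needed on $X$ itself but only on any isometric superspace into which $X$ embeds, and $C(B_{X^*})$ provides such a universal superspace. The remaining work is the routine verification that Minkowski sums, convex hulls, Hausdorff distances, Riemann sums, and their limits all commute with the isometric embedding $J$.
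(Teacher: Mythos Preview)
Your proposal is correct and follows essentially the same route as the paper: embed $X$ isometrically into $C(B_{X^*}, w^*)$ via the canonical evaluation map, invoke the approximation property of $C(K)$ to apply the previous proposition there, and then transfer the Riemann integrability of $J\circ F$ back to $F$. The paper phrases the last step as ``the theorem is true for every closed linear subspace of $C(K)$'' rather than via an explicit transfer principle, but this is the same observation you spell out.
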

\begin{proof}
Consider $K = \overline{B(X^*)}$ in the weak$^*$ topology. By the Banach-Alaoglu theorem, $K$ is compact. Define a map $U: X \to C(K)$: 
$$
(U(x))(x^*) = x^*(x).
$$ 
$U$ is linear, $\|Ux\| = \sup_{x \in K} |x^*(x)| = \sup_{x \in B_X} |x^*(x)| = \|x\|$ for all $x \in X$, therefore $U$ is an isometric embedding.
According to \cite[example~4.2]{Ryan}, $C(K)$ possesses the approximation property. Thus, by the previous proposition, our Theorem is true in $C(K)$. Consequently, the Theorem is true for every closed linear subspace of  $C(K)$, in particular, it is true in $U(X)$, which is just an isometric copy of $X$.
\end{proof}

Since the closed limit of a sequence of precompacts is compact, we arrive at the following fact:

\begin{corollary}
Let $X$ be a Banach space. Let $F: [0,1] \to \bb{X}$ be a multifunction with precompact values such that its convex hull is Riemann integrable. Then the multifunction $F$ is also Riemann integrable.
\end{corollary}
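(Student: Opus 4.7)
The plan is to reduce this corollary to the preceding theorem, which already handles the case where $A = \int_0^1 \conv F(t)\dt$ is compact. So the only thing we actually need to verify is that, under the precompactness assumption on the values of $F$, the set $A$ is automatically compact.

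First I would observe that if $F(t)$ is precompact for every $t$, then so is $\conv F(t)$, by Mazur's theorem: the closed convex hull of a precompact (i.e.\ totally bounded) subset of a Banach space is compact, hence $\conv F(t)$ itself is precompact. Consequently, for any tagged partition $T = \{\Delta_i, t_i\}_{i=1}^n$ the Riemann sum
\begin{equation*}
S(\conv F, T) = \sum_{i=1}^n |\Delta_i|\, \conv F(t_i)
\end{equation*}
is a finite Minkowski sum of precompact sets, and hence is itself precompact (a finite Minkowski sum of totally bounded sets is totally bounded, since one just forms an $\eps/n$-net in each summand and adds them).

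Next I would pick any sequence of tagged partitions $T_n$ with $d(T_n) \to 0$, so that $\hausd(S(\conv F, T_n), A) \to 0$. Since the Hausdorff limit of totally bounded sets is totally bounded (given $\eps > 0$, choose $n$ with $\hausd(S(\conv F, T_n), A) < \eps/2$, take a finite $\eps/2$-net in $S(\conv F, T_n)$, and this net becomes an $\eps$-net for $A$), the set $A$ is totally bounded. By the definition of the Riemann integral, $A \in \bc X$, i.e.\ $A$ is closed; being closed and totally bounded in the complete space $X$, it is compact.

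With $A$ now known to be compact, the preceding theorem applies directly and yields that $F$ is Riemann integrable (with $\int_0^1 F(t)\dt = A$ by Corollary \ref{cor25u}). There is no real obstacle here; the content is entirely in the elementary observation that precompactness is preserved by convex hulls, by finite Minkowski sums, and by Hausdorff limits, so the compactness hypothesis of the previous theorem is automatic.
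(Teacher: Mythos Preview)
Your proposal is correct and follows exactly the approach the paper uses: the paper's entire proof is the single remark ``Since the closed limit of a sequence of precompacts is compact, we arrive at the following fact,'' and you have simply spelled out the details (precompactness of convex hulls via Mazur, of finite Minkowski sums, and of Hausdorff limits) that justify this remark before invoking the preceding theorem.
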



\begin{thebibliography}{}

\bibitem{aum}
R.~J. Aumann, \emph{Integrals of set-valued functions}, J. Math. Anal.
Appl. \textbf{12} (1965), 1--12. 
 \bibitem{BenLind} 
Y.~Benyamini and J.~Lindenstrauss,
Geometric nonlinear functional analysis. Volume 1. 
Colloquium Publications. AMS. 48. Providence, RI: AMS. xi, 488 p. (2000). 

\bibitem{cas-kad-rod}
B.~Cascales, V.~Kadets, and J.~Rodr{\'{\i}}guez, \emph{The {P}ettis integral
 for multi-valued functions via single-valued ones}, J. Math. Anal. Appl.
 \textbf{332} (2007), no.~1, 1--10.

\bibitem{cas-kad-rod-2}
B.~Cascales, V.~Kadets, and J.~Rodr{\'{\i}}guez, \emph{Measurable selectors and set-valued {P}ettis integral in
 non-separable {B}anach spaces}, J. Funct. Anal. \textbf{256} (2009), no.~3,
 673--699. 

\bibitem{cas-kad-rod-3}
B.~Cascales, V.~Kadets, and J.~Rodr{\'{\i}}guez, \emph{Measurability and selections of multi-functions in {B}anach
 spaces}, J. Convex Anal. \textbf{17} (2010), no.~1, 229--240.

\bibitem{cas-rod-2}
B.~Cascales and J.~Rodr{\'{\i}}guez, \emph{Birkhoff integral for multi-valued
 functions}, J. Math. Anal. Appl. \textbf{297} (2004), no.~2, 540--560,
 Special issue dedicated to John Horv\'ath.
 
\bibitem{cas-val}
C.~Castaing and M.~Valadier, \emph{Convex analysis and measurable
 multifunctions}, Springer-Verlag, Berlin, 1977, Lecture Notes in Mathematics,
 Vol. 580. 
 
 \bibitem{dbr}
G.~Debreu, 
\emph{Integration of correspondences}, 
Proc. Fifth Berkeley
Sympos. Math. Statist. and Probability (Berkeley, Calif., 1965/66), Vol. II:
 Contributions to Probability Theory, Part 1, Univ. California Press,
 Berkeley, Calif., 1967, pp.~351--372.
 
\bibitem{hes-J}
C.~Hess, 
\emph{Set-valued integration and set-valued probability theory:an overview}, 
Handbook of measure theory, Vol. I, II, North-Holland, Amsterdam,
 2002, pp.~617--673.
 
\bibitem{Hukuhara} 
M. Hukuhara, 
\emph{Integration Des Applications Mesurables Dont La Valeur Est Un Compact Convexe}, (in French) Funkcial. Ekvac., No. 10, pp. 205--223, 1967. 
 
\bibitem{Iv-Pol2012} 
G. Ivanov and E. S. Polovinkin, 
\emph{A Generalization of the Set Averaging Theorem},
Math. Notes, 92:3 (2012), 369-374

\bibitem{Kadets1988}
V.~Kadets, \emph{The domain of weak limits of integral Riemann sums of an abstract function}. (English. Russian original) Sov. Math. 32, No. 9, 56-66 (1988); translation from Izv. Vyssh. Uchebn. Zaved., Mat. 1988, No. 9(316), 39-46 (1988). 

\bibitem{Kadets}
M.~Kadets, V.~Kadets,
\emph{Series in Banach spaces: conditional and unconditional convergence}, Translated from the Russian by Andrei Iacob, Basel; Boston; Berlin: Birkhauser, 1997 

\bibitem{kle-tho}
E.~Klein and A.~C. Thompson, 
\emph{Theory of correspondences}, 
Canadian Mathematical Society Series of Monographs and Advanced Texts, John Wiley \&
 Sons Inc., New York, 1984, Including applications to mathematical economics,
 A Wiley-Interscience Publication. 
 
\bibitem{Pisier}
G.~Pisier, \emph{Sur les espaces qui ne contiennent pas de $\ell_n^1$ uniform\'ement}

\bibitem{Pisier2} G.~Pisier,
\emph{Geometry of Banach spaces. Local finite-dimensional theory. Lectures given at the University of Zaragoza, December 1980}. (Geometria de los espacios de Banach. Teoria local finito-dimensional). Compiled by J. Bastero. (Spanish) Zaragoza: Universidad de Zaragoza, Facultad de Ciencias, Departamento de Teoria de Funciones. II, 54 p. (1983). 

\bibitem{Polovin1974} 
E. S. Polovinkin, 
\emph{Riemannian integral of set-valued function},
in ``Optimization Techniques IFIP Technical Conference Novosibirsk, July 1--7, 1974'', Springer, Berlin--Heidelberg, 405--410 (1975).

\bibitem{Polovin1983} E. S. Polovinkin, \emph{The integration of multivalued mappings}, Dokl. Akad. Nauk SSSR, 271:5 (1983), 1069--1074; Dokl. Math., 28 (1983), 223--228.

\bibitem{Ryan}
R.A.~Ryan,
 \emph{Introduction to Tensor Products of Banach Spaces}, Springer, Galway, 2001.
 

\bibitem{Wojt} P.~Wojtaszczyk,
\emph{Banach spaces for analysts}. Cambridge: Cambridge Univ. Press. xiii, 382 p. (1996). 
 
 

\end{thebibliography}
\end{document}